\theoremstyle{plain}
\theoremstyle{plain}
\newtheorem{theorem}{Theorem}[section]
\newtheorem{proposition}[theorem]{Proposition}
\newtheorem{lemma}[theorem]{Lemma}
\newtheorem{corollary}[theorem]{Corollary}
\theoremstyle{definition}
\newtheorem{definition}[theorem]{Definition}
\newtheorem{remark}[theorem]{Remark}
\theoremstyle{remark}
\numberwithin{equation}{section}
\newcommand{\na}{\mathbb{N}}
\newcommand{\re}{\mathbb{R}}
\newcommand{\N}{\mathbb{N}}
\newcommand{\R}{\mathbb{R}}
\def\huz{H^1_0 (\Omega)}
\def\hu{H^1 (\Omega)}
\def\luo{L^{1}(\Omega)}
\def\lio{{L^{\infty}(\Omega)}}
\def\co{{C(\overline\Omega)}}
\def\into{\int_{\Omega}}
\def\intdo{\int_{\partial\Omega}}
\def\ae{\mathrm{a.e.}}
\newcommand{\ou}{\overline{u}}
\newcommand{\tu}{\tilde{u}}
\begin{document}
\title[Multiplicity of solutions for semilinear Robin problems involving sign-changing nonlinearities]{Multiplicity of nonnegative solutions for semilinear Robin problems involving sign-changing nonlinearities}

\author[J. Carmona Tapia]{Jos\'{e} Carmona Tapia}
\address[Jos\'{e} Carmona Tapia]{Departamento de Matem\'aticas\\ Uni\-ver\-si\-dad de Alme\-r\'ia\\Ctra. Sacramento s/n\\
La Ca\-\~{n}a\-da de San Urbano\\ 04120 - Al\-me\-r\'{\i}a, Spain}
\email{jcarmona@ual.es}

\author[A. J. Mart\'{i}nez Aparicio]{Antonio J. Mart\'{i}nez Aparicio}
\address[Antonio J. Mart\'{i}nez Aparicio]{Departamento de Matem\'aticas\\ Uni\-ver\-si\-dad de Alme\-r\'ia\\Ctra. Sacramento s/n\\
La Ca\~{n}ada de San Urbano\\ 04120 - Al\-me\-r\'{\i}a, Spain}
\email{ajmaparicio@ual.es}

\author[P. J. Mart\'{i}nez-Aparicio]{Pedro J. Mart\'{i}nez-Aparicio}
\address[Pedro J. Mart\'{i}nez-Aparicio]{Departamento de Matem\'aticas\\ Uni\-ver\-si\-dad de Alme\-r\'ia\\Ctra. Sacramento s/n\\
La Ca\~{n}ada de San Urbano\\ 04120 - Al\-me\-r\'{\i}a, Spain}
\email{pedroj.ma@ual.es}

\keywords{Robin boundary conditions, Positive solutions, Nonlinearities having zeros} \subjclass[2020]{35B09, 35B40, 35J61}

\begin{abstract}
In this article, we investigate the existence and multiplicity of solutions to the Robin problem
\begin{equation*}
	\begin{cases}
		-\Delta u = \lambda f(u) & \text{in } \Omega,\\
		\frac{\partial u}{\partial \nu} + \gamma u=0 & \text{on } \partial\Omega,
	\end{cases}
\end{equation*}
where $\Omega\subset \re^N$ ($N\geq 1$) is a smooth bounded domain, and $\lambda, \gamma>0$. Our main assumption is that $f\colon \re\to \re$ is a locally Lipschitz function, possibly sign-changing, such that $f(s)>0$ for every $s\in (\alpha,\beta)$, where $0<\alpha<\beta$ are two zeros of $f$. Without any further conditions, we establish the existence of two nonnegative solutions whose maximum lies in $(\alpha,\beta)$ for sufficiently large $\lambda$. 

Moreover, we analyse the limiting behaviour of the solution set of this Robin problem, showing that it degenerates into that of the associated Neumann problem as $\gamma\to 0$ and into that of the associated Dirichlet problem as $\gamma\to\infty$.
\end{abstract}

\maketitle
 
\tableofcontents

\section{Introduction}

We study the following Robin problem
\begin{equation}
    \tag{$P_{\lambda,\gamma}$}
	\label{eq:PbLambda}
	\begin{cases}
		-\Delta u = \lambda f(u) & \text{in } \Omega,\\
		\frac{\partial u}{\partial \nu} + \gamma u=0 & \text{on } \partial\Omega,
	\end{cases}
\end{equation}
where $\Omega\subset \re^N$ ($N\geq 1$)  is a bounded domain with $C^2$ boundary, $\nu(x)$ denotes the outward unit normal vector to $x\in \partial\Omega$, and $\lambda,\gamma > 0$ are two parameters. Here, $f\colon \R\to \R$ is a locally Lipschitz function that has at least two positive zeros, $0<\alpha<\beta$, such that $f(s)>0$ for all $s\in (\alpha,\beta)$. We stress that no additional assumptions are made on $f$; in particular, $f$ is allowed to change sign. Note that $\gamma=0$ corresponds to the Neumann boundary condition, while $\gamma=\infty$ formally leads to the Dirichlet boundary condition. 

Our main objective is to study the set of the nonnegative solutions to~\eqref{eq:PbLambda} whose maximum lies between $\alpha$ and $\beta$, i.e., the solutions of~\eqref{eq:PbLambda} that belong to
\[
\mathcal{O}_{\alpha\beta} = \{u\in\co : u\geq 0 \text{ and } \alpha<\|u\|_\co < \beta\}.
\]
In addition to existence and multiplicity, we also investigate the asymptotic behaviour of the solution set of~\eqref{eq:PbLambda} as $\gamma$ tends to $0$ and to $\infty$. Specifically, we show that it converges to the solution set of the corresponding Neumann problem as $\gamma\to 0$, and to that of the associated Dirichlet problem as $\gamma\to\infty$. For this analysis, we demonstrate the following stability result: when $\gamma\to\infty$, solutions of~\eqref{eq:PbLambda} converge strongly in $C(\overline \Omega)$ to solutions of the Dirichlet problem~\eqref{eq:PbDirich}. As far as we know, this type of convergence, key in our study, has not been previously established in the literature.

The existence of solutions belonging to $\mathcal{O}_{\alpha\beta}$ has been extensively studied when the boundary conditions are Dirichlet. For the Dirichlet problem
\begin{equation}
	\label{eq:PbDirich}
	\begin{cases}
		-\Delta u = \lambda f(u) & \text{in } \Omega,\\
		u=0 & \text{on } \partial\Omega,
	\end{cases}
\end{equation}
the strong maximum principle precludes the existence of a solution $u\geq 0$ with $\|u\|_{\co}=\sigma$, where $\sigma>0$ is such that $f(\sigma)\leq 0$. In particular, no nonnegative solution to~\eqref{eq:PbDirich} can have a maximum value of $\alpha$ or $\beta$. In~\cite{Hess}, assuming $f(0)>0$, the author (inspired by~\cite{Brown-Budin}) showed that an area condition on $f$ is sufficient to ensure the existence of two positive solutions of~\eqref{eq:PbDirich} with $\co$-norm between $\alpha$ and $\beta$ for any $\lambda$ greater than some $\overline\lambda>0$. Specifically, this area condition is given by
\begin{equation}
    \label{eq:hyp_hess}
    \int_s^\beta f(t) \ \mathrm{d}t > 0 \text{ for all } s\in[0,\beta).
\end{equation}
Years later, the condition $f(0)>0$ was dropped in~\cite{Cl-Sw}, where the authors showed that~\eqref{eq:hyp_hess} is sufficient to guarantee the existence of a positive solution to~\eqref{eq:PbDirich} belonging to $\mathcal{O}_{\alpha\beta}$ for large $\lambda$. Furthermore, the authors of~\cite{Cl-Sw} and~\cite{Da-Sch} independently proved that condition~\eqref{eq:hyp_hess} is also necessary for the existence of positive solutions to~\eqref{eq:PbDirich} in $\mathcal{O}_{\alpha\beta}$. Consequently, if $r_{\alpha}\in [\alpha,\beta)$ is defined as
\[
r_\alpha:= \inf \left\{ r\in (\alpha, \beta): \int_s^r f(t) \ \mathrm{d}t > 0 \text{ for all } s\in[0,r) \right\},
\]
then any positive solution $u$ to~\eqref{eq:PbDirich} with $\|u\|_\co\in (\alpha,\beta)$ must verify $\|u\|_\co\geq r_{\alpha}$ (see~\cite{Da-Sch}).

Since these pioneering works, such results have been extended to other operators, always assuming Dirichlet boundary conditions. In general, the area condition~\eqref{eq:hyp_hess} is assumed to establish the existence (and sometimes the multiplicity) of solutions in $\mathcal{O}_{\alpha\beta}$, whereas its necessity has been less explored. To mention a few works, this has been done for local operators, including the $p$-Laplacian (\cite{Guo-Webb, Ng-Sch}), the $\phi$-Laplacian (\cite{Correa}), the $p(x)$-Laplacian (\cite{Ho-Kim-Sim}), the $1$-Laplacian (\cite{dS-Fig-Pi}), and a Schr\"{o}dinger type operator (\cite{dS-Sil}), as well as for nonlocal operators such as the fractional Laplacian (\cite{CR}), and a Kirchhoff operator (\cite{A-C-MA}).

Another extensively studied question is the asymptotic behaviour of solutions to~\eqref{eq:PbDirich} that belong to $\mathcal{O}_{\alpha\beta}$ when $\lambda$ diverges. Whenever~\eqref{eq:hyp_hess} holds, it is well-known that the maximal solution $\ou_\lambda$ to~\eqref{eq:PbDirich} in $[0,\beta]$ exists for large $\lambda$, belongs to $\mathcal{O}_{\alpha\beta}$, and converges uniformly to $\beta$ in compact subsets of $\Omega$ (see~\cite{Cl-Sw}). However, determining the behaviour of the second solution $\tilde u_\lambda$ in $\mathcal{O}_{\alpha\beta}$ to~\eqref{eq:PbDirich} is significantly more challenging. As far as we know, such results are available only when $f$ is nonnegative. In this case, $\tilde u_\lambda$ converges to $\alpha$ uniformly in compact subsets of $\Omega$, provided that $f$ is subcritical in a right neighbourhood of $\alpha$ (see~\cite{B-GM-I}). 

More properties of~\eqref{eq:PbDirich} have been explored, for instance, in~\cite{C-MA-MA}, where several nonlinearities with an infinite number of zeros are considered, leading in some cases to the existence of an infinite number of bifurcation points. We also highlight~\cite{Guo}, where the author studies the flat core solutions to~\eqref{eq:PbDirich} that arise when $f$ is not Lipschitz in $\beta$.

For the Robin problem~\eqref{eq:PbLambda}, a natural first question is whether~\eqref{eq:hyp_hess} is necessary and sufficient for the existence of solutions in $\mathcal{O}_{\alpha\beta}$. This question is not only interesting from a purely mathematical point of view, but also for its applications in population dynamics (see~\cite{Shivaji} for further details). We show that the answer to this question is negative: without imposing any assumptions on $f$, we prove that solutions to~\eqref{eq:PbLambda} in $\mathcal{O}_{\alpha\beta}$ always exist for any $\gamma>0$ whenever $\lambda$ is large. To the best of our knowledge, for problems with nonlinearities that have zeros, this is the first existence result of solutions in $\mathcal{O}_{\alpha\beta}$  where~\eqref{eq:hyp_hess} is not required.

Our first result is stated as follows.

\begin{theorem}
\label{th:exist_gam_fix}
Let $f\colon \R \to \R$ be a locally Lipschitz function such that $f(s)>0$ when $s\in (\alpha,\beta)$, where $0<\alpha<\beta$ denote two consecutive zeros of $f$. Then, for every fixed $\gamma>0$, there exists ${\lambda}_{\mathrm{mult}}(\gamma)>0$ such that problem~\eqref{eq:PbLambda} has two solutions in $\mathcal{O}_{\alpha\beta}$ for every $\lambda > {\lambda}_{\mathrm{mult}}$, one of them maximal in $[0,\beta]$.

Furthermore, the maximal solution in $[0,\beta]$, denoted by $\ou_\lambda$, verifies that $\ou_\lambda \to \beta$ in $C(\overline\Omega)$ as $\lambda\to\infty$.

Finally, ${\lambda}_{\mathrm{mult}}(\gamma)\to 0$ as $\gamma\to 0$.
\end{theorem}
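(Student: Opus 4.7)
I would obtain the two solutions by different methods: the maximal one via the method of sub- and super-solutions, and the second one by a variational (mountain-pass) argument. The asymptotic statement $\lambda_{\mathrm{mult}}(\gamma)\to 0$ will fall out of the sub-solution construction.

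The constant function $\beta$ is a strict super-solution of~\eqref{eq:PbLambda}, since $-\Delta\beta=0=\lambda f(\beta)$ and $\partial_\nu\beta+\gamma\beta=\gamma\beta>0$. For a matching sub-solution I would fix $\alpha<c_0<c<\beta$ and let $\underline u(x)=\phi(d(x))$, where $d(x)=\mathrm{dist}(x,\partial\Omega)$ and $\phi\colon [0,\infty)\to[c_0,c]$ is a smooth nondecreasing profile with $\phi(0)=c_0$ and $\phi\equiv c$ for $t\geq\epsilon$. A direct computation gives $\partial_\nu\underline u=-\phi'(0)$, so the Robin inequality $\partial_\nu\underline u+\gamma\underline u\leq 0$ reduces to $\phi'(0)\geq\gamma c_0$, which forces $\epsilon\lesssim 1/\gamma$; meanwhile, using $\min_{[c_0,c]}f=m>0$ and $\|\phi''\|_\infty=O(1/\epsilon^2)$, the interior inequality $-\Delta\underline u\leq\lambda f(\underline u)$ holds once $\lambda\gtrsim 1/\epsilon^2$. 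Balancing these two conditions gives a subsolution whenever $\lambda\geq C_0\gamma^2$, with $C_0=C_0(f,c,c_0,\Omega)$; in particular $\lambda_{\mathrm{mult}}(\gamma)\to 0$ as $\gamma\to 0$. Monotone iteration starting from $\beta$ then produces the maximal solution $\ou_\lambda\in[\underline u,\beta]$; the strong maximum principle together with Hopf's lemma excludes $\ou_\lambda\equiv\beta$, so $\ou_\lambda\in\mathcal{O}_{\alpha\beta}$. By letting $c,c_0\nearrow\beta$ in the construction (permissible for larger and larger $\lambda$), the sub-solutions themselves tend uniformly to $\beta$, and since $\underline u\leq\ou_\lambda\leq\beta$ one concludes $\ou_\lambda\to\beta$ in $C(\overline\Omega)$.

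For the second solution I would set $\hat f(s)=f(s)\mathbf{1}_{[0,\beta]}(s)$ and study
\begin{equation*}
J_\lambda(u)=\tfrac12\int_\Omega|\nabla u|^2+\tfrac{\gamma}{2}\int_{\partial\Omega}u^2-\lambda\int_\Omega\hat F(u)
\end{equation*}
on $H^1(\Omega)$. Because $\gamma>0$, the quadratic part is equivalent to $\|\cdot\|_{H^1}^2$ and $\hat F$ is bounded, so $J_\lambda$ is coercive, of class $C^1$, and satisfies Palais--Smale. Testing the Euler--Lagrange equation against $u^-$ and $(u-\beta)^+$ forces every critical point to lie in $[0,\beta]$, so critical points are genuine solutions of~\eqref{eq:PbLambda}. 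To get a second critical point distinct from $\ou_\lambda$, I would first modify $\hat f$ by a further $x$-dependent truncation at $\ou_\lambda(x)$ from above to make $\ou_\lambda$ a strict local minimum of the modified functional, and then apply the mountain-pass theorem between $\ou_\lambda$ and a suitable reference point. The resulting min-max level sits strictly above the energy of $\ou_\lambda$, hence its critical point $\tu_\lambda$ differs from $\ou_\lambda$; a comparison argument then places $\tu_\lambda$ in $\mathcal{O}_{\alpha\beta}$.

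The main obstacle is precisely this mountain-pass geometry: because the area condition~\eqref{eq:hyp_hess} is not assumed, the energy $J_\lambda(\ou_\lambda)$ need not be negative, and the usual device of connecting $\ou_\lambda$ to $0$ by a path crossing a barrier is not automatically available. The $x$-dependent truncation at $\ou_\lambda$ and the verification that the min-max value strictly exceeds $J_\lambda(\ou_\lambda)$ are the most delicate steps, and also what make this existence result qualitatively different from its Dirichlet analogue.
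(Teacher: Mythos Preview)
Your construction of the maximal solution is a genuine alternative to the paper's, and arguably more elementary: the paper builds a subsolution by taking radial solutions of an auxiliary Dirichlet problem on interior balls, extending them harmonically and gluing (Lemma~3.2 and Proposition~3.3), whereas your distance-function profile $\phi(d(x))$ gives a one-step subsolution and immediately yields both $\ou_\lambda\to\beta$ in $C(\overline\Omega)$ and the threshold estimate $\lambda\gtrsim\gamma^2$. That part is fine.

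The gap is entirely in the second solution. The paper does \emph{not} use mountain-pass; it computes the Leray--Schauder degree of $I-K$ on two nested sets. A homotopy to $\mu\lambda$ with $\mu\in[0,1]$ gives $\deg(I-K,\mathcal{O}_{\alpha\beta},0)=0$, while the ordered pair $(\ou_{\overline\lambda,\gamma},\beta)$ of strict sub/supersolutions gives $\deg(I-K,\mathcal{O}_1,0)=1$ on $\mathcal{O}_1=\{\ou_{\overline\lambda,\gamma}<u<\beta\}\subset\mathcal{O}_{\alpha\beta}$. Additivity then forces a fixed point in $\mathcal{O}_{\alpha\beta}\setminus\overline{\mathcal{O}_1}$, which is automatically a second solution \emph{inside} $\mathcal{O}_{\alpha\beta}$. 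Your scheme does not deliver this localisation: even granting that $\ou_\lambda$ can be made a strict local minimiser of a truncated functional, you have not identified the second endpoint of the mountain-pass path, you have not verified the separating barrier (and you acknowledge that without~\eqref{eq:hyp_hess} the usual path through $0$ is unavailable), and---most importantly---nothing in your argument prevents the resulting critical point from having $\|\tu_\lambda\|_{C(\overline\Omega)}\le\alpha$. The hypotheses place no restriction on $f$ on $(0,\alpha)$, so there may well be solutions of the truncated problem with maximum below $\alpha$; the ``comparison argument'' you invoke to rule this out is not supplied and is not obvious. Because $\lambda_{\mathrm{mult}}(\gamma)$ is the threshold for \emph{two} solutions in $\mathcal{O}_{\alpha\beta}$, your estimate $\lambda\ge C_0\gamma^2$ for the subsolution alone does not yet give $\lambda_{\mathrm{mult}}(\gamma)\to0$ either.

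In short: keep your subsolution, but replace the variational search for $\tu_\lambda$ by the degree computation $\deg(I-K,\mathcal{O}_{\alpha\beta},0)=0$ versus $\deg(I-K,\mathcal{O}_1,0)=1$; this is what makes the second solution land in $\mathcal{O}_{\alpha\beta}$ without any assumption on $f|_{(0,\alpha)}$.
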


Next, we show that there is some continuity between the results we obtain for~\eqref{eq:PbLambda} and the well-known results for the Dirichlet problem~\eqref{eq:PbDirich}. With this aim, whenever~\eqref{eq:hyp_hess} holds, we define
\begin{equation}
    \label{eq:def_lambda_infty}
    \lambda_\infty:= \inf \left\{\lambda\geq 0: \textrm{\eqref{eq:PbDirich} admits solution $u\geq 0$ with } \alpha < \|u\|_{C(\overline\Omega)}<\beta \right\}.
\end{equation}
Due to the results of~\cite{Cl-Sw}, this infimum is both finite and positive. Recall that if~\eqref{eq:hyp_hess} does not hold, then the set over which the infimum is taken is empty (\cite{Cl-Sw}).

As a consequence of Theorem~\ref{th:exist_gam_fix}, we obtain the following result. A key ingredient in the proof is the strong convergence in $\co$ of solutions of~\eqref{eq:PbLambda} to solutions of~\eqref{eq:PbDirich} as $\gamma\to \infty$. To establish this stability result (see Lemma~\ref{lem:stability}), we employ a recent approach by~\cite{Arc-Rez-Sil} based on the Stampacchia regularity method (\cite{Stamp}). To our knowledge, this is the first general result that connects Robin and Dirichlet solutions through strong convergence in $\co$.

\begin{theorem}
\label{th:lambda_bh}
Let $f\colon \R \to \R$ be a locally Lipschitz function such that $f(s)>0$ when $s\in (\alpha,\beta)$, where $0<\alpha<\beta$ denote two consecutive zeros of $f$. Then 
\[
\lambda_{\mathrm{min}}(\gamma):= \inf \left\{\lambda\geq 0: \textrm{\eqref{eq:PbLambda} admits solution $u\geq 0$ with } \alpha < \|u\|_{C(\overline\Omega)}<\beta \right\}
\]
is finite and positive for every $\gamma>0$. Moreover, $\lambda_{\mathrm{min}}(\gamma) \to 0$ as $\gamma\to 0$ and
\begin{enumerate}
    \item[i)] if~\eqref{eq:hyp_hess} holds, then $\lambda_{\mathrm{min}}(\gamma) \to \lambda_\infty$ as $\gamma\to \infty$, where $\lambda_\infty$ is defined in~\eqref{eq:def_lambda_infty},
    \item[ii)] if~\eqref{eq:hyp_hess} does not hold, then $\lambda_{\mathrm{min}}(\gamma) \to \infty$ as $\gamma\to \infty$.
\end{enumerate}
\end{theorem}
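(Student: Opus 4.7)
The plan is to deduce finiteness, positivity, and the limit at $\gamma=0$ directly from Theorem~\ref{th:exist_gam_fix} and elementary comparison, and then to handle the two $\gamma\to\infty$ regimes by combining a sub/super-solution construction with the stability result (Lemma~\ref{lem:stability}). Theorem~\ref{th:exist_gam_fix} gives a solution in $\mathcal{O}_{\alpha\beta}$ whenever $\lambda>\lambda_{\mathrm{mult}}(\gamma)$, so $\lambda_{\mathrm{min}}(\gamma)\leq \lambda_{\mathrm{mult}}(\gamma)$, which is finite and tends to $0$ as $\gamma\to 0$. For strict positivity I would compare with the Robin torsion function $w_\gamma$ (solving $-\Delta w_\gamma=1$ in $\Omega$ and $\partial_\nu w_\gamma+\gamma w_\gamma=0$ on $\partial\Omega$) and use $f\leq M:=\max_{[0,\beta]}f>0$: for any $u\in\mathcal{O}_{\alpha\beta}$ the comparison principle gives $u\leq \lambda M w_\gamma$, so $\alpha<\|u\|_\co\leq \lambda M\|w_\gamma\|_\co$ forces $\lambda\geq \alpha/(M\|w_\gamma\|_\co)>0$.

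For the upper estimate in case (i) I would show $\lambda_{\mathrm{min}}(\gamma)\leq \lambda_\infty$ for every $\gamma>0$. Fix $\lambda_1>\lambda_\infty$ for which \eqref{eq:PbDirich} admits a solution $u_D\in\mathcal{O}_{\alpha\beta}$. Then $u_D$ is a Robin subsolution of $(P_{\lambda_1,\gamma})$ for every $\gamma$, since on $\partial\Omega$ one has $u_D=0$ and $\partial_\nu u_D\leq 0$; the constant $\beta$ is a supersolution because $f(\beta)=0$ and $\gamma\beta>0$. The sub/super-solution method then yields a Robin solution $u_\gamma$ with $u_D\leq u_\gamma\leq \beta$, so automatically $\|u_\gamma\|_\co>\alpha$. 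To exclude $\|u_\gamma\|_\co=\beta$ I would apply the strong maximum principle and the Hopf lemma to $w:=\beta-u_\gamma\geq 0$: the Lipschitz bound $|f(\beta-w)|\leq Lw$ near $w=0$ (using $f(\beta)=0$) gives the local inequality $-\Delta w+\lambda_1 L w\geq 0$, so an interior zero of $w$ would force $w\equiv 0$, contradicting $\partial_\nu w=\gamma u_\gamma=\gamma\beta>0$ on $\partial\Omega$, while a boundary zero is excluded by Hopf applied to $(-\Delta+\lambda_1 L)$. Thus $u_\gamma\in\mathcal{O}_{\alpha\beta}$, and letting $\lambda_1\searrow\lambda_\infty$ gives the claim.

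For the lower bound in (i) and the statement of (ii) I would argue by contradiction through Lemma~\ref{lem:stability}. Assume $\gamma_n\to\infty$ with $\lambda_{\mathrm{min}}(\gamma_n)\leq c$ (with $c<\lambda_\infty$ in (i), $c$ arbitrary in (ii)), and pick $\lambda_n\leq c+1/n$ together with $u_n\in\mathcal{O}_{\alpha\beta}$ solving $(P_{\lambda_n,\gamma_n})$. Since $\|u_n\|_\co<\beta$ and $(\lambda_n)$ are uniformly bounded, Lemma~\ref{lem:stability} extracts (up to a subsequence) $u_n\to u$ in $\co$ and $\lambda_n\to\lambda^*\leq c$, where $u$ solves \eqref{eq:PbDirich} at $\lambda^*$ and $\alpha\leq\|u\|_\co\leq\beta$. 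I then need to rule out the two endpoint values. For both, the Lipschitz property of $f$ combined with $f(\alpha)=f(\beta)=0$ yields $|f(u)|\leq L(\alpha-u)$ near $u=\alpha$ and $|f(u)|\leq L(\beta-u)$ near $u=\beta$, so the functions $v:=\alpha-u$ and $w:=\beta-u$ satisfy $(-\Delta+\lambda^*L)v\geq 0$ and $(-\Delta+\lambda^*L)w\geq 0$ near their respective zero sets; the strong minimum principle then forces $v$ (resp.~$w$) to be constant as soon as it vanishes at an interior point, contradicting the Dirichlet values $v=\alpha>0$ and $w=\beta>0$ on $\partial\Omega$. Hence $u\in\mathcal{O}_{\alpha\beta}$ is a Dirichlet solution at $\lambda^*\leq c$, contradicting $c<\lambda_\infty$ in (i) and the necessity of \eqref{eq:hyp_hess} for solutions in $\mathcal{O}_{\alpha\beta}$ (\cite{Cl-Sw, Da-Sch}) in (ii).

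The main obstacle I anticipate is the careful execution of these strong maximum principle arguments that land the limit solutions strictly inside $\mathcal{O}_{\alpha\beta}$ rather than on its boundary; the Lipschitz hypothesis on $f$ at $\alpha$ and $\beta$ is precisely what enables the local $(-\Delta+cI)$-superharmonic structure that makes the exclusion of $\|u\|_\co\in\{\alpha,\beta\}$ possible, both in the sub/super-solution construction and in the Dirichlet limit.
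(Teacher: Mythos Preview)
Your proposal is correct and follows essentially the same architecture as the paper: finiteness and the $\gamma\to 0$ limit come from Theorem~\ref{th:exist_gam_fix}, the upper bound $\lambda_{\mathrm{min}}(\gamma)\leq\lambda_\infty$ in case (i) comes from using a Dirichlet solution as a Robin subsolution below the constant $\beta$, and both $\gamma\to\infty$ statements are settled by a contradiction through the stability Lemma~\ref{lem:stability}.

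A few points of comparison are worth recording. For positivity, the paper simply observes that the set $A_\gamma$ is closed (via Lemma~\ref{lem:stability} and Corollary~\ref{cor:comparison}) and that $0\notin A_\gamma$; your torsion-function comparison is a legitimate alternative that yields an explicit lower bound, and the comparison principle you invoke is exactly Proposition~\ref{prop:comparison}. For the upper bound in (i), the paper works directly at $\lambda_\infty$ (the infimum is attained, again by closedness), while you approximate from above; both are fine. Finally, your by-hand exclusion of $\|u\|_\co\in\{\alpha,\beta\}$ via the strong maximum principle is precisely the content of Corollary~\ref{cor:comparison} (and its Dirichlet analogue noted in the paper), so you are reproving what the paper cites. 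One small polish: your Lipschitz estimates $|f(u)|\leq L(\alpha-u)$ and $|f(u)|\leq L(\beta-u)$ hold globally on $[0,\alpha]$ and $[0,\beta]$ respectively (since $f(\alpha)=f(\beta)=0$ and $f$ is Lipschitz on these compact intervals), so there is no need to argue only ``near the zero set''---this makes the strong minimum principle application clean and global.
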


According to Theorem~\ref{th:exist_gam_fix}, for each $\lambda>0$ fixed, there exists some $\tilde \gamma (\lambda)>0$ small such that, for any $\gamma\in (0, \tilde \gamma)$, problem~\eqref{eq:PbLambda} has two solutions in $\mathcal{O}_{\alpha\beta}$. To conclude our study, we analyse the behaviour of this pair of solutions when $\gamma$ tends to 0. As $\lambda$ does not play any role, to better illustrate the ideas we fix $\lambda=1$, i.e., we consider problem
\begin{equation}
    \tag{$P_{\gamma}$}
	\label{eq:PbGamma}
	\begin{cases}
		-\Delta u = f(u) & \text{in } \Omega,\\
		\frac{\partial u}{\partial \nu} + \gamma u=0 & \text{on } \partial\Omega.
	\end{cases}
\end{equation}

For small $\gamma>0$, we denote by $\ou_\gamma$ and $\tilde u_\gamma$ the two solutions of~\eqref{eq:PbGamma} in $\mathcal{O}_{\alpha\beta}$ given by Theorem~\ref{th:exist_gam_fix}, where $\overline u_\gamma$ represents the maximal solution in $[0,\beta]$. As we show (see Lemma~\ref{lem:stability}), their limits when $\gamma\to 0$ are always solutions to the Neumann problem
\begin{equation}
\label{eq:PbNeumann}
	\begin{cases}
		-\Delta u = f(u) & \text{in } \Omega,\\
		\frac{\partial u}{\partial \nu}=0 & \text{on } \partial\Omega,
	\end{cases}
\end{equation}
with maximum belonging to the interval $[\alpha,\beta]$. Note that any zero of $f$ is a constant solution to~\eqref{eq:PbNeumann}; in particular, both $\alpha$ and $\beta$ are constant solutions. Our aim is to prove that $\ou_\gamma$ converges to $\beta$ in $\co$ and $\tilde u_\gamma$ converges to $\alpha$ in $\co$ as $\gamma\to 0$. For $\ou_\gamma$, this can be proven without additional assumptions, whereas for $\tilde u_\gamma$, we have to assume the nonexistence of non-constant solutions to~\eqref{eq:PbNeumann}.

When $f\geq 0$, any solution $u$ to~\eqref{eq:PbNeumann} must be constant due to the compatibility condition ${\into f(u)=0}$. However, when $f$ changes sign, the nonexistence of non-constant solutions to~\eqref{eq:PbNeumann} (also known as patterns) is a challenging question that, so far, has only partial answers. Further insights on this issue can be found in~\cite{Ci-Co-Ron} or in~\cite{Nord}, where, assuming the convexity of the domain, the nonexistence of positive patterns to~\eqref{eq:PbNeumann} is addressed. In~\cite{Ci-Co-Ron}, this is proven under the assumption that $f(t)t^{-\frac{N+2}{N-2}}$ is decreasing for $t>0$, while in~\cite{Nord} the same is established assuming that $\sup f'$ is smaller than a constant depending on the domain.

Once the nonexistence of patterns of~\eqref{eq:PbNeumann} is assumed, only two possible limits remain for $\tilde u_\gamma$: either $\alpha$ or $\beta$. Our strategy is to prove that, for small $\gamma>0$, the only solution of problem~\eqref{eq:PbGamma} contained in a left neighbourhood of $\beta$ is $\ou_\gamma$. To show this, we need to add some monotonicity to $f$ near $\beta$; specifically, we assume that there is some $\delta>0$ such that
\begin{equation}
    \label{eq:hyp_mon}
    f'(s)\leq 0 \text{ in } (\beta-\delta, \beta).
\end{equation}

Now, we are ready to state our last result.

\begin{theorem}
\label{th:limit_gamma}
Let $f\colon \R \to \R$ be a locally Lipschitz function such that $f(s)>0$ when $s\in (\alpha,\beta)$, where $0<\alpha<\beta$ denote two consecutive zeros of $f$. For small $\gamma>0$, let $\ou_\gamma$ and $\tilde u_\gamma$ be the two solutions to~\eqref{eq:PbGamma} given by Theorem~\ref{th:exist_gam_fix}, where $\ou_\gamma$ is maximal in $[0,\beta]$. Then,
\[
\ou_\gamma\to \beta \text{ in } \co \text{ as } \gamma\to 0.
\]
Furthermore, if~\eqref{eq:PbNeumann} has no non-constant solutions with maximum in $[\alpha,\beta]$, and if~\eqref{eq:hyp_mon} holds, then 
\[
\tilde u_\gamma\to \alpha \text{ in } \co \text{ as } \gamma\to 0.
\]
\end{theorem}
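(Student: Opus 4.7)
The strategy is to handle the two convergences separately, in each case reducing the problem to identifying a Neumann limit via Lemma~\ref{lem:stability}. For any sequence $\gamma_n\downarrow 0$, that stability result supplies, along a subsequence, limits $u^*$ of $\ou_{\gamma_n}$ and $v^*$ of $\tilde u_{\gamma_n}$ in $\co$, both solutions of \eqref{eq:PbNeumann} with $\co$-norm in $[\alpha,\beta]$; the full-sequence convergence will then follow from the subsequence principle once every such limit is identified with the claimed constant.

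For the first claim, $u^*\leq\beta$ is immediate since $\ou_\gamma\leq\beta$. To get $u^*\geq\beta$, I would exploit the maximality of $\ou_\gamma$ in $[0,\beta]$ by constructing, for each $\epsilon\in(0,\beta-\alpha)$, a subsolution $\underline u_\gamma:=\beta-\epsilon-\gamma\psi$, where $\psi\in C^2(\overline\Omega)$ is nonnegative and satisfies $\partial\psi/\partial\nu\geq\beta$ on $\partial\Omega$ (for instance, a smoothed version of $K-\beta\,d(x)$, with $d$ the distance to $\partial\Omega$ and $K$ large enough to ensure positivity). A direct computation shows that, for small $\gamma>0$, $-\Delta\underline u_\gamma=\gamma\Delta\psi\leq f(\underline u_\gamma)$ (the latter bounded below by $\tfrac12 f(\beta-\epsilon)>0$), while
\[
\frac{\partial \underline u_\gamma}{\partial\nu}+\gamma\underline u_\gamma=\gamma\bigl(\beta-\epsilon-\partial\psi/\partial\nu\bigr)-\gamma^2\psi\leq-\gamma\epsilon<0,
\]
so $\underline u_\gamma$ is a genuine subsolution of \eqref{eq:PbGamma}, while the constant $\beta$ is a supersolution. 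The monotone iteration yields a solution in $[\underline u_\gamma,\beta]\subset[0,\beta]$, dominated by $\ou_\gamma$ by maximality; passing to the limit gives $u^*\geq\beta-\epsilon$, and as $\epsilon$ is arbitrary, $u^*\equiv\beta$.

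For the second claim, the no-patterns assumption forces $v^*$ to be a constant Neumann solution, and the only such with $\co$-norm in $[\alpha,\beta]$ are $\alpha$ and $\beta$ since $f>0$ on $(\alpha,\beta)$; ruling out $v^*\equiv\beta$ is the crux of the argument and is where the monotonicity assumption \eqref{eq:hyp_mon} enters. I would establish the following local uniqueness lemma: any two solutions $u_1,u_2$ of \eqref{eq:PbGamma} whose ranges lie in $[\beta-\delta,\beta]$ must coincide. Setting $w:=u_1-u_2$ and $a(x):=\int_0^1 f'(tu_1(x)+(1-t)u_2(x))\,dt$, the range restriction together with \eqref{eq:hyp_mon} gives $a\leq 0$ a.e., so $w$ solves the linear Robin problem $-\Delta w=a(x)w$ in $\Omega$, $\partial w/\partial\nu+\gamma w=0$ on $\partial\Omega$; testing with $w$ and integrating by parts yields
\[
\into|\nabla w|^2+\gamma\intdo w^2\,\dH=\into a(x)\,w^2\leq 0,
\]
which, since $\gamma>0$, forces $w\equiv 0$. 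If $v^*\equiv\beta$, then for large $n$ both $\ou_{\gamma_n}$ (by the first part) and $\tilde u_{\gamma_n}$ have range in $[\beta-\delta,\beta]$, so uniqueness gives $\ou_{\gamma_n}=\tilde u_{\gamma_n}$, contradicting that these are distinct solutions in $\mathcal{O}_{\alpha\beta}$. Therefore $v^*\equiv\alpha$, completing the proof.
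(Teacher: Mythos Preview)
Your proof is correct and reaches the same conclusions, but by genuinely different means in both parts.

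For $\ou_\gamma\to\beta$, the paper does not build a subsolution; instead it argues (as in Proposition~\ref{prop:exist}) that $\ou_\gamma$ is monotone in $\gamma$, so by Lemma~\ref{lem:stability} the limit solves the Neumann problem for the truncated nonlinearity $\tilde f\ge 0$, and the compatibility condition $\into\tilde f(u)=0$ forces the limit to be the constant $\beta$. Your barrier $\beta-\epsilon-\gamma\psi$ gives the uniform lower bound directly, without invoking the auxiliary truncated problem or the monotonicity of $\ou_\gamma$; it is more self-contained, though it requires the mild construction of $\psi$ with prescribed normal derivative (your distance-function sketch is fine, or one may simply solve a Neumann problem for $\psi$).

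For $\tilde u_\gamma\to\alpha$, the paper's route is degree-theoretic: under~\eqref{eq:hyp_mon} every solution with range in $(\beta-\delta,\beta)$ is \emph{stable} (first linearized eigenvalue positive), hence has local index $1$; since Lemma~\ref{lem:LS_2} gives $\deg(I-K,\mathcal{O}_1,0)=1$ for the order interval $\mathcal{O}_1=\{\ou_{\gamma_0}<u<\beta\}$, additivity forces uniqueness there, so $\tilde u_\gamma\notin\mathcal{O}_1$. Your energy identity
\[
\into|\nabla w|^2+\gamma\intdo w^2=\into a(x)w^2\le 0
\]
yields the same uniqueness in a completely elementary way, avoiding index theory altogether. (A minor point: since $f$ is only locally Lipschitz, it is cleaner to take $a(x)=\dfrac{f(u_1)-f(u_2)}{u_1-u_2}$ on $\{u_1\neq u_2\}$ and note $(f(u_1)-f(u_2))(u_1-u_2)\le 0$ directly from the monotonicity of $f$ on $[\beta-\delta,\beta]$, rather than writing $a$ as an integral of $f'$.) The paper's approach has the advantage of fitting the degree framework already in place and potentially generalizing to settings where such a clean energy estimate is unavailable; yours is shorter and requires less machinery.
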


In Figure~\ref{fig:diagramas}, we combine all our results to sketch the bifurcation diagram of~\eqref{eq:PbLambda} depending on the shape of $f$. In a sense, Robin solutions act as an ``interpolation'' between Neumann and Dirichlet solutions, when these exist. As $\gamma\to 0$, the solution set of~\eqref{eq:PbLambda} degenerates into that of the associated Neumann problem. On the other hand, if a solution to~\eqref{eq:PbDirich} in $\mathcal{O}_{\alpha\beta}$ exists (i.e., if~\eqref{eq:hyp_hess} holds), then as $\gamma\to \infty$, the set of solutions to the Robin problem~\eqref{eq:PbLambda} approaches that of the Dirichlet problem~\eqref{eq:PbDirich}.

\begin{figure}[ht]
\centering
\begin{subfigure}{.09\textwidth}
  \captionsetup{justification=centering}
  \includegraphics[width=1.8cm]{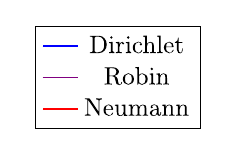}
  \vspace{5mm}
\end{subfigure}
\begin{subfigure}[t]{.3\textwidth}
  \captionsetup{justification=centering}
  \includegraphics[width=4.4cm]{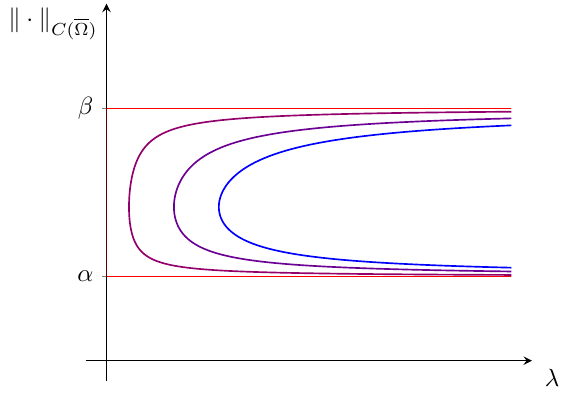}
  \caption{$f\geq 0$}
\end{subfigure}%
\begin{subfigure}[t]{.3\textwidth}
  \captionsetup{justification=centering}
  \includegraphics[width=4.4cm]{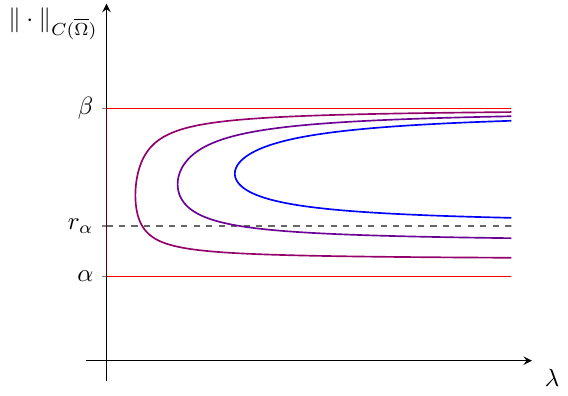}
  \caption{$f$ changes sign and verifies~\eqref{eq:hyp_hess}}
\end{subfigure}%
\begin{subfigure}[t]{.3\textwidth}
  \captionsetup{justification=centering}
  \includegraphics[width=4.4cm]{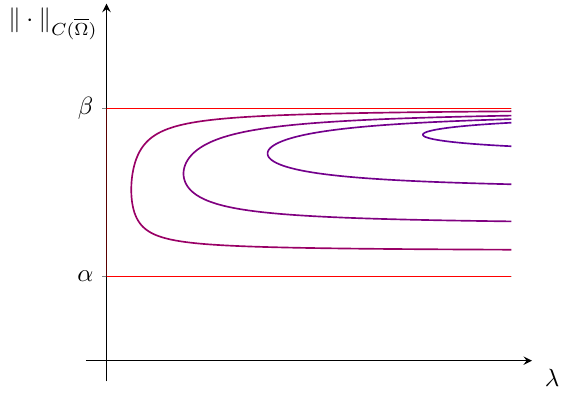}
  \caption{$f$ changes sign and does\\
  not verify~\eqref{eq:hyp_hess}}
\end{subfigure}
\caption{\label{fig:diagramas} Sketch of the bifurcation diagram of problem~\eqref{eq:PbLambda} in $\lambda$, depicted in different shades of purple for various values of $\gamma$, according to Theorem~\ref{th:exist_gam_fix}, Theorem~\ref{th:lambda_bh} and Theorem~\ref{th:limit_gamma}. The solution set of the associated Neumann problem is highlighted in red, while, as long as it exists, the solution set of the Dirichlet problem~\eqref{eq:PbDirich} is shown in blue.
}
\end{figure}

The structure of the article is the following. In Section~\ref{sec:prelim}, we introduce the fundamental tools used throughout this work, including some consequences of the strong maximum principle and a stability lemma. Section~\ref{sec:aux} is devoted to the study of an auxiliary problem resulting from extending $f$ by zero outside $[\alpha,\beta]$. Finally, in Section~\ref{sec:main}, we prove the main results stated in the introduction.

\section{Preliminaries}
\label{sec:prelim}

Throughout the article, we deal with classical solutions. For clarity, we give its definition.

\begin{definition}
    A function $u\in C^2(\overline\Omega)$ is a \textit{subsolution} (resp. a \textit{supersolution}) to~\eqref{eq:PbLambda} if, pointwise, it satisfies
    \begin{equation*}
	\begin{cases}
		-\Delta u \overset{(\geq)}{\leq}\lambda f(u) & \text{in } \Omega,\\
		\frac{\partial u}{\partial \nu} + \gamma u \overset{(\geq)}{\leq} 0 & \text{on } \partial\Omega.
	\end{cases}
\end{equation*}
We say that $u\in C^2(\overline\Omega)$ is a \textit{solution} to~\eqref{eq:PbLambda} if it is both a sub- and a supersolution.
\end{definition}

Sometimes, because of the tools used, we need to consider weak solutions. However, in our setting, classical and weak solutions coincide, as explained in Remark~\ref{rem:weak}. In the following, we recall its definition.

\begin{definition}
    A function $u\in H^1(\Omega)$ is a \textit{weak subsolution} (resp. a \textit{weak supersolution}) to~\eqref{eq:PbLambda} if it verifies
    \begin{equation*}
    \into \nabla u \nabla \varphi + \gamma \intdo u \varphi\  \overset{(\geq)}{\leq} \lambda\into f(u)\varphi,\ \forall \varphi\in \hu \text{ with } \varphi\geq 0.
    \end{equation*}
    In the same way, we say that $u\in H^1(\Omega)$ is a \textit{weak solution} to~\eqref{eq:PbLambda} if
    \begin{equation}
        \label{eq:weak_form}
        \into \nabla u \nabla \varphi + \gamma \intdo u \varphi = \lambda\into f(u)\varphi,\ \forall \varphi\in \hu.
    \end{equation}
\end{definition}

\begin{remark}
\label{rem:weak}
Any bounded weak solution of~\eqref{eq:PbLambda} is actually a classical solution. Indeed, let $u\in \hu\cap \lio$ be a bounded weak solution of~\eqref{eq:PbLambda}. Since the trace of $u$ belongs to $L^\infty(\partial\Omega)$, and both $u$ and $f(u)$ are in $L^\infty(\Omega)$, it follows from~\cite[Theorem~2.1]{Arc-Ros} that $u\in C^{0,\eta}(\overline{\Omega})$ for any $\eta\in(0,1)$. As $f$ is locally Lipschitz, then Schauder estimates (see~\cite[Theorem~6.1]{Gil-Trud}) immediately imply that $u\in C^{2,\eta}(\overline \Omega)$.
\end{remark}

In practice, we always deal with bounded solutions, where both concepts of solution can be used interchangeably. Specifically, the solutions we obtain are always nonnegative and less than $\beta$. Therefore, we can assume without loss of generality that $f$ is a globally Lipschitz function. This implies the existence of some $M>0$ such that 
    \begin{equation}
        \label{eq:f_incr}
        f(s)+Ms \text{ is increasing in } \re.
    \end{equation}

\subsection{Consequences of the strong maximum principle}

Here, we prove some results that can be derived from the maximum principle. Although we believe that the following result is well-known, we include its proof for the sake of completeness.

\begin{proposition}
\label{prop:comparison}
    Let $\gamma\geq 0$. Assume that $f$ is a Lipschitz function. Let $w\in C^2(\overline\Omega)$ be a subsolution of~\eqref{eq:PbGamma} and let $v\in C^2(\overline\Omega)$ be a supersolution. If $w\leq v$ in $\Omega$ and $w\not\equiv v$, then $w<v$ in $\overline\Omega$.
\end{proposition}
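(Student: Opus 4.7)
\medskip

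\noindent\textbf{Plan.} The natural strategy is the classical one: introduce the difference $z:=v-w$, turn the pair of inequalities into a single linear differential inequality for $z$ via the Lipschitz character of $f$, and then apply the strong maximum principle in $\Omega$ together with the Hopf boundary-point lemma to handle $\partial\Omega$.

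\medskip

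\noindent\textbf{Step 1: Reduction to a linear inequality.} Set $z:=v-w \in C^2(\overline{\Omega})$. By hypothesis $z\ge 0$ and $z\not\equiv 0$. Subtracting the sub- and supersolution inequalities,
\[
-\Delta z \;\ge\; f(v)-f(w) \quad \text{in } \Omega, \qquad \frac{\partial z}{\partial \nu}+\gamma z \;\ge\; 0 \quad \text{on } \partial\Omega.
\]
If $L>0$ denotes a Lipschitz constant of $f$, then $f(v)-f(w)\ge -L(v-w)=-Lz$, so that
\[
-\Delta z + L z \;\ge\; 0 \quad \text{in } \Omega.
\]
This is the linear elliptic inequality with nonnegative coefficient that makes the maximum principle applicable.

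\medskip

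\noindent\textbf{Step 2: Interior strictness.} Since $z\ge 0$, $z\not\equiv 0$ and $-\Delta z + Lz \ge 0$ in $\Omega$, the strong maximum principle (e.g.\ Gilbarg--Trudinger, Thm.~3.5) rules out an interior zero of $z$, so $z>0$ in $\Omega$.

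\medskip

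\noindent\textbf{Step 3: Boundary strictness via Hopf.} It remains to exclude $z(x_0)=0$ for some $x_0\in\partial\Omega$. Suppose by contradiction that such an $x_0$ exists. Since $\Omega$ has $C^2$ boundary (so the interior ball condition holds) and $z$ attains its minimum on $\overline\Omega$ at $x_0$ with $z>0$ in $\Omega$, the Hopf boundary-point lemma applied to $-\Delta z + Lz\ge 0$ yields
\[
\frac{\partial z}{\partial \nu}(x_0) \;<\; 0.
\]
On the other hand, the Robin-type inequality for $z$ together with $z(x_0)=0$ gives
\[
\frac{\partial z}{\partial \nu}(x_0) \;\ge\; -\gamma\,z(x_0) \;=\; 0,
\]
a contradiction. (Note this argument is insensitive to whether $\gamma=0$ or $\gamma>0$.) Therefore $z>0$ on all of $\overline\Omega$, i.e.\ $w<v$ in $\overline\Omega$.

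\medskip

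\noindent\textbf{Main obstacle.} There is no serious obstacle here; the statement is essentially a textbook consequence of the strong maximum principle, and the only subtlety is that the Robin boundary inequality must be combined with the Hopf lemma to force the strict inequality up to $\partial\Omega$. The $C^2$ regularity of $\partial\Omega$ (assumed throughout the paper) supplies the interior ball condition required by Hopf, and the global Lipschitz continuity of $f$ (which, as the authors note via~\eqref{eq:f_incr}, may be assumed without loss of generality in the regime of interest) supplies the coefficient needed to write the problem as $-\Delta z + Lz\ge 0$.
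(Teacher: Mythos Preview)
Your proof is correct and follows essentially the same approach as the paper: write the linear inequality $-\Delta z + Lz \ge 0$ for $z=v-w$ (the paper uses the constant $M$ from~\eqref{eq:f_incr} instead of a Lipschitz constant, which amounts to the same thing), apply the strong maximum principle in the interior, and use the Hopf lemma together with the Robin boundary inequality to exclude a boundary zero. The only cosmetic difference is that the paper phrases Step~2 as ``the minimum is attained on $\partial\Omega$'' before case-splitting, whereas you separate interior and boundary positivity explicitly.
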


\begin{proof}
    Let $M>0$ be the positive constant defined in~\eqref{eq:f_incr}. Taking into account that $w\leq v$ in $\Omega$, we can say that
    \[
    f(v)+Mv - f(w) - Mw \geq 0 \text{ in } \Omega
    \]

    In this way, $v-w$ verifies the problem
    \begin{equation}
    \label{eq:Pf_Comp_1}
    \begin{cases}
    -\Delta (v-w) + M(v-w) \geq 0 & \mbox{in} \; \Omega,\\
    \frac{\partial (v-w)}{\partial \nu} + \gamma (v-w) \geq 0 & \mbox{on} \; \partial \Omega.
    \end{cases} 
    \end{equation}

    By the strong maximum principle, $v-w$ reaches its minimum in $\overline\Omega$ at some point $x_0\in \partial \Omega$. As $w\leq v$ in $\overline\Omega$ we have two possibilities: either $(v-w)(x_0)>0$ or $(v-w)(x_0)=0$. To finish the proof, we show that the second case cannot happen. Suppose that $(v-w)(x_0)=0$. As $w\not\equiv v$, the Hopf Lemma (\cite[Lemma~3.4]{Gil-Trud}) implies that $\frac{\partial(v-w)}{\partial \nu}(x_0)<0$. But this is a contradiction since we know from the boundary condition of~\eqref{eq:Pf_Comp_1} that $\frac{\partial (v-w)}{\partial \nu}(x_0)\geq -\gamma (v-w)(x_0) =0$.
\end{proof}

From Proposition~\ref{prop:comparison}, we can easily deduce the next result. 

\begin{corollary}
\label{cor:comparison}
    Let $\gamma>0$. Assume that $f$ is a Lipschitz function such that $f(\sigma)\leq 0$ for some $\sigma>0$. Then, there is no nonnegative solution $u\in C^2(\overline\Omega)$ to~\eqref{eq:PbGamma} such that $\|u\|_{C(\overline\Omega)} = \sigma$.
\end{corollary}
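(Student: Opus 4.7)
The plan is to use $v \equiv \sigma$ as a supersolution and then apply Proposition~\ref{prop:comparison} with $w = u$.

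First I would observe that the constant function $v \equiv \sigma$ is a supersolution of~\eqref{eq:PbGamma}: indeed $-\Delta v = 0 \geq f(\sigma) = f(v)$ in $\Omega$ by hypothesis, while on $\partial\Omega$ we have $\frac{\partial v}{\partial\nu} + \gamma v = \gamma\sigma > 0$, since $\gamma,\sigma > 0$. Next, any nonnegative solution $u$ with $\|u\|_{\co} = \sigma$ satisfies $u \leq \sigma = v$ pointwise in $\overline\Omega$ and is trivially a subsolution of~\eqref{eq:PbGamma}.

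The two alternatives are $u \equiv v$ or $u \not\equiv v$. If $u \equiv \sigma$, then plugging into the boundary condition of~\eqref{eq:PbGamma} gives $\gamma\sigma = 0$, which contradicts $\gamma,\sigma>0$. If instead $u \not\equiv v$, Proposition~\ref{prop:comparison} yields the strict inequality $u < \sigma$ on all of $\overline\Omega$; by continuity and compactness of $\overline\Omega$, this forces $\|u\|_{\co} < \sigma$, contradicting the assumption $\|u\|_{\co} = \sigma$.

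No step looks genuinely delicate: everything hinges on having set up the right barrier and on the strict comparison already proved in Proposition~\ref{prop:comparison}. The only subtle point to highlight in the write-up is the role of $\gamma>0$, which is what makes $v\equiv\sigma$ a \emph{strict} supersolution on the boundary and prevents $u\equiv\sigma$ from being admissible; the analogous statement with $\gamma=0$ (Neumann) fails precisely because constants solve the problem.
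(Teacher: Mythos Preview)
Your proof is correct and follows essentially the same approach as the paper: use $v\equiv\sigma$ as a (strict) supersolution and invoke Proposition~\ref{prop:comparison} to obtain $u<\sigma$ in $\overline\Omega$, contradicting $\|u\|_{\co}=\sigma$. The only cosmetic difference is that the paper dispatches the case $u\equiv\sigma$ by noting that $v$ is a \emph{strict} supersolution (hence cannot coincide with a solution), whereas you separate this out explicitly via the boundary condition; both are fine.
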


\begin{proof}
    Suppose that there is some $u\in C^2(\overline\Omega)$ nonnegative solution of~\eqref{eq:PbGamma} with $\|u\|_{C(\overline\Omega)} = \sigma$. As $v\equiv \sigma$ is a strict supersolution of~\eqref{eq:PbGamma}, and as $u\leq \sigma$ in $\Omega$, using Proposition~\ref{prop:comparison} we have that $u<\sigma$ in $\overline\Omega$, but this is a contradiction with $\|u\|_{C(\overline\Omega)} = \sigma$.
\end{proof}

\begin{remark}
    The same result holds for the Dirichlet problem~\eqref{eq:PbDirich} (see, for instance,~\cite[Lemma~6.2]{Amb-Hess}).
\end{remark}

In the following, we prove a pair of results related to the Leray-Schauder degree. For a nice introduction to this topic, we refer the reader to~\cite[Chapter~4]{Amb-Arc}.

Let $M>0$ be the one defined in~\eqref{eq:f_incr}. For any fixed $\lambda, \gamma>0$, we define the operator $K \colon \co \to C^2(\overline\Omega)$ as $K(w_1) = w_2$ for any $w_1\in \co$, where $w_2\in C^2(\overline\Omega)$ is the unique solution of
\begin{equation}
\label{eq:def_K}
\begin{cases}
\displaystyle -\Delta w_2 + \lambda Mw_2 = \lambda f(w_1) + \lambda Mw_1 & \mbox{in} \; \Omega,\\
\frac{\partial w_2}{\partial \nu} + \gamma w_2 = 0 & \mbox{on} \; \partial \Omega.
\end{cases} 
\end{equation}
This operator, which is well-defined, is compact in $\co$. Observe that a function $u\in C(\overline\Omega)$ is a solution of~\eqref{eq:PbLambda} if and only if $u$ is a fixed point of $K$, i.e., if $K(u)=u$.

\begin{lemma}
\label{lem:LS_1}
Suppose that $f$ is a Lipschitz function such that $f(\alpha)=f(\beta)=0$, where $0<\alpha<\beta$. If
\[
\mathcal{O}_{\alpha\beta} := \{u\in\co : u\geq 0 \text{ and } \alpha<\|u\|_\co<\beta\},
\]
then $\operatorname{deg}(I-K, \mathcal{O}_{\alpha\beta}, 0)$ is well-defined and its value is 0.
\end{lemma}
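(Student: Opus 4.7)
The plan is to compute the degree via the compact homotopy $H(s,u):=sK(u)$ for $s\in[0,1]$, connecting $K$ at $s=1$ to the zero operator at $s=0$, and then to invoke homotopy invariance. Since $0\notin\mathcal{O}_{\alpha\beta}$ (because $\|0\|_\co=0<\alpha$), the desired conclusion $\deg(I-K,\mathcal{O}_{\alpha\beta},0)=\deg(I,\mathcal{O}_{\alpha\beta},0)=0$ will follow provided $H(s,\cdot)$ has no fixed points on $\partial\mathcal{O}_{\alpha\beta}$ for any $s\in[0,1]$. At $s=0$ this is immediate, since the only fixed point of the zero operator is $u\equiv 0$, which lies outside $\overline{\mathcal{O}_{\alpha\beta}}$. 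At $s=1$, the well-definedness of $\deg(I-K,\mathcal{O}_{\alpha\beta},0)$ is already guaranteed by Corollary~\ref{cor:comparison} applied with $\sigma=\alpha$ and $\sigma=\beta$ (using $f(\alpha)=f(\beta)=0$).

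For intermediate $s\in(0,1)$, the identity $u=sK(u)$ is equivalent to $K(u)=u/s$; substituting this into the defining equation~\eqref{eq:def_K} of $K$ and multiplying by $s$ shows that $u$ satisfies
\[
-\Delta u = s\lambda f(u)+(s-1)\lambda M u \text{ in } \Omega,\qquad \frac{\partial u}{\partial\nu}+\gamma u = 0 \text{ on } \partial\Omega.
\]
Setting $g_s(v):=s\lambda f(v)+(s-1)\lambda M v$, which is Lipschitz, the values $g_s(\alpha)=(s-1)\lambda M\alpha\leq 0$ and $g_s(\beta)=(s-1)\lambda M\beta\leq 0$ are both nonpositive for every $s\in[0,1]$. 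A second application of Corollary~\ref{cor:comparison}, with $g_s$ replacing $f$, then excludes nonnegative solutions of the displayed equation with $\co$-norm equal to $\alpha$ or $\beta$.

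With admissibility established for all $s\in[0,1]$, the homotopy invariance of the Leray--Schauder degree yields $\deg(I-K,\mathcal{O}_{\alpha\beta},0)=\deg(I,\mathcal{O}_{\alpha\beta},0)=0$. The main technical point---easy to miss but essential---is the observation that the extra term $(s-1)\lambda M v$ produced by the linear rescaling of $K$ is automatically nonpositive at $v=\alpha$ and $v=\beta$ whenever $s\in[0,1]$, so the sign hypothesis of Corollary~\ref{cor:comparison} is preserved uniformly along the deformation and no spurious boundary fixed points appear.
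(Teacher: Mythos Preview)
Your proof is correct and follows the same overall plan as the paper: deform $K$ to the zero operator by a compact homotopy, use Corollary~\ref{cor:comparison} to rule out fixed points on $\partial\mathcal{O}_{\alpha\beta}$, and conclude from $0\notin\mathcal{O}_{\alpha\beta}$. The only difference lies in the choice of homotopy. The paper replaces $\lambda$ by $\mu\lambda$ throughout~\eqref{eq:def_K}, obtaining operators $K_\mu$ whose fixed points are precisely the solutions of $(P_{\mu\lambda,\gamma})$; since $f(\alpha)=f(\beta)=0$, Corollary~\ref{cor:comparison} then applies to each intermediate problem without modification. Your linear deformation $sK$ is simpler to set up but produces the extra term $(s-1)\lambda M u$ in the fixed-point equation; you handle this correctly by observing that it is nonpositive at $\alpha$ and $\beta$, so Corollary~\ref{cor:comparison} still applies to the modified nonlinearity $g_s$. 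The paper's route is marginally cleaner because every intermediate problem is again an instance of~\eqref{eq:PbLambda}, while yours requires the additional (easy) sign check; both arguments are entirely valid.
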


\begin{proof}
For any $\mu\in [0,1]$, we define the operator $K_\mu \colon \co \to C^2(\overline\Omega)$ as $K_\mu(w_1) = w_2$ for any $w_1\in \co$, where $w_2\in C^2(\overline\Omega)$ is the unique solution of
\begin{equation*}
\begin{cases}
-\Delta w_2 + \mu\lambda Mw_2 = \mu\lambda f(w_1) + \mu\lambda Mw_1 & \mbox{in} \; \Omega,\\
\frac{\partial w_2}{\partial \nu} + \gamma w_2 = 0 & \mbox{on} \; \partial \Omega.
\end{cases} 
\end{equation*}
These operators are well-defined and compact in $\co$. We point out that $K_0$ is just the zero operator and that $K_1=K$, where $K$ is the operator defined in~\eqref{eq:def_K}. Observe that a function $u\in C(\overline\Omega)$ is a solution of $(P_{\mu\lambda,\gamma})$ if and only if $u$ is a fixed point of $K_\mu$, i.e., if $K_\mu(u)=u$. In particular, any solution of~\eqref{eq:PbLambda} is a fixed point of $K$ and vice versa.

Due to Corollary~\ref{cor:comparison}, given any $\mu\in[0,1]$ we can assure that $K_\mu(w)\neq w$ for every $w\in \partial \mathcal{O}_{\alpha\beta}$. Therefore, the Leray-Schauder degree of these operators is well-defined in $\mathcal{O}_{\alpha\beta}$ and we can say that $\operatorname{deg}(I - K_\mu, \mathcal{O}_{\alpha\beta}, 0)$ is independent of $\mu\in[0,1]$. As a consequence, we obtain that
\begin{equation*}
\operatorname{deg}(I-K, \mathcal{O}_{\alpha\beta}, 0) = \operatorname{deg}(I, \mathcal{O}_{\alpha\beta}, 0) = 0. \qedhere
\end{equation*}
\end{proof}

\begin{lemma}
\label{lem:LS_2}
Suppose that $f$ is a Lipschitz function. Assume that $w\in C^2(\overline\Omega)$ is a strict subsolution of~\eqref{eq:PbGamma} and that $v\in C^2(\overline\Omega)$ is a strict supersolution. If
\[
\mathcal{O}_1 := \{u\in\co : w(x)<u(x)<v(x) \text{ in } \overline\Omega\},
\]
then $\operatorname{deg}(I-K, \mathcal{O}_1, 0)$ is well-defined and its value is 1.
\end{lemma}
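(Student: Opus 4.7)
The plan is to follow a classical sub-/supersolution plus topological degree scheme: introduce a truncated modification $\tilde K$ of $K$ whose fixed points are automatically trapped strictly between $w$ and $v$, compute $\operatorname{deg}(I-\tilde K,\cdot,0)$ on a large ball by a trivial homotopy, and transfer the value to $\mathcal{O}_1$ via excision, using that $\tilde K$ agrees with $K$ on $\mathcal{O}_1$.

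Concretely, I would define the pointwise truncation $T(u)(x):=\max\{w(x),\min\{v(x),u(x)\}\}$, which is continuous on $\co$ and satisfies $w\le T(u)\le v$, and let $\tilde K(w_1)=w_2$ be the unique solution of
\[
-\Delta w_2 + \lambda M w_2 = \lambda\bigl[f(T(w_1)) + M\,T(w_1)\bigr]\ \text{in }\Omega,\qquad \tfrac{\partial w_2}{\partial\nu}+\gamma w_2=0\ \text{on }\partial\Omega,
\]
with $M$ as in~\eqref{eq:f_incr}. Observe that $\tilde K\equiv K$ on $\mathcal{O}_1$, since $T$ is the identity there. The core step is to show that every fixed point $u$ of $\tilde K$ satisfies $w<u<v$ in $\overline\Omega$. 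To obtain $u\le v$, I would subtract the weak equation for $u$ from the weak supersolution inequality for $v$ (both shifted by $\lambda M$), test with $(u-v)^+\in H^1(\Omega)$, and observe that on $\{u>v\}$ one has $T(u)=v$, so $f(T(u))+MT(u)-f(v)-Mv\equiv 0$ there. What remains is
\[
\int_\Omega |\nabla(u-v)^+|^2 + \lambda M\int_\Omega ((u-v)^+)^2 + \gamma\int_{\partial\Omega}((u-v)^+)^2 \le 0,
\]
which, since $\lambda M>0$, forces $(u-v)^+\equiv 0$. The symmetric test against $(w-u)^+$ yields $u\ge w$. Hence $T(u)=u$, so $u$ is actually a fixed point of $K$, i.e.\ a classical solution of the problem, and Proposition~\ref{prop:comparison} applied with the strict subsolution $w$ and strict supersolution $v$ promotes these to $w<u<v$ in $\overline\Omega$, that is, $u\in\mathcal{O}_1$.

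Since $T$ takes values in $[w,v]$, the right-hand side defining $\tilde K$ is uniformly bounded in $L^\infty(\Omega)$; the regularity recalled in Remark~\ref{rem:weak} then yields $\tilde K(\co)\subset B_R$ for some $R>0$, which I choose large enough that $\overline{\mathcal{O}_1}\subset B_R$ as well. The homotopy $h_t(u)=t\tilde K(u)$, $t\in[0,1]$, satisfies $\|h_t(u)\|_{\co}\le\|\tilde K(u)\|_{\co}<R$ for every $u\in\overline{B_R}$, so it has no fixed points on $\partial B_R$, and thus $\operatorname{deg}(I-\tilde K,B_R,0)=\operatorname{deg}(I,B_R,0)=1$. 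By the trapping step, every fixed point of $\tilde K$ in $B_R$ lies in $\mathcal{O}_1$, so excision gives $\operatorname{deg}(I-\tilde K,\mathcal{O}_1,0)=1$; combined with $\tilde K=K$ on $\mathcal{O}_1$, this yields $\operatorname{deg}(I-K,\mathcal{O}_1,0)=1$.

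The hard part will be the trapping $w<u<v$: the argument hinges on the monotone shift by $\lambda M$ interacting correctly with $T$ (so that the right-hand side cancels on $\{u>v\}$), on the favourable sign of the Robin boundary term $\gamma\int_{\partial\Omega}((u-v)^+)^2\ge 0$, and on the strictness of the sub/supersolutions — the last point being precisely what allows one to pass from $w\le u\le v$ to the open condition $u\in\mathcal{O}_1$ via Proposition~\ref{prop:comparison}.
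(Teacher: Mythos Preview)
Your argument is correct, but the paper takes a shorter and rather different route. Instead of introducing a truncated operator $\tilde K$ and working on a large ball, the paper shows directly that the original $K$ maps $\overline{\mathcal{O}_1}$ into $\mathcal{O}_1$: for $u_1\in\overline{\mathcal{O}_1}$ the monotone shift gives $f(u_1)+Mu_1\ge f(w)+Mw$, so $u_2:=K(u_1)$ satisfies $-\Delta(u_2-w)+\lambda M(u_2-w)\ge 0$ together with the Robin inequality, and the Hopf argument of Proposition~\ref{prop:comparison} (plus the observation that $u_2\equiv w$ would force $w$ to be a solution, contradicting strictness) yields $w<u_2<v$ in $\overline\Omega$. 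Since $\mathcal{O}_1$ is convex, the linear homotopy $sK+(1-s)\psi$ to any fixed $\psi\in\mathcal{O}_1$ then stays inside $\mathcal{O}_1$ for every $s\in[0,1]$, so there are no fixed points on $\partial\mathcal{O}_1$ and $\operatorname{deg}(I-K,\mathcal{O}_1,0)=\operatorname{deg}(I-\psi,\mathcal{O}_1,0)=1$. Your truncation approach trades this convexity trick for a weak-formulation trapping argument plus excision from a ball; it is heavier but more robust, since it does not rely on $K(\overline{\mathcal{O}_1})\subset\mathcal{O}_1$ nor on the shape of $\mathcal{O}_1$. One small point: $T$ is the identity on all of $\overline{\mathcal{O}_1}=\{w\le u\le v\}$, so $\tilde K=K$ there, which is exactly what you need to identify the two degrees.
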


\begin{proof}
First, we claim that $K$ maps $\overline{\mathcal{O}_1}$ into $\mathcal{O}_1$. Indeed, let us take $u_1\in \overline{\mathcal{O}_1}$ and denote $u_2:=K(u_1)$. We have that $w(x)\leq u_1(x)\leq v(x) \text{ in } \overline\Omega$ and we want to show that $w(x)< u_2(x) < v(x) \text{ in } \overline\Omega$. We are proving only the first inequality as the other one follows in a similar way. First, notice that
\[
f(u_1)+Mu_1 - f(w) - Mw \geq 0 \text{ in } \Omega
\]
as a consequence of~\eqref{eq:f_incr}. Moreover, $u_2\not\equiv w$. Suppose instead that $u_2\equiv w$. In this case, $f(u_1)+Mu_1 - f(w) - Mw \equiv 0$ and thus $u_1\equiv u_2\equiv w$. This implies that $w$ is solution~\eqref{eq:PbGamma}, but this is a contradiction since $w$ is a strict subsolution.

Therefore, $u_2-w$ verifies problem
\begin{equation*}
\begin{cases}
-\Delta (u_2-w) + M(u_2-w) \geq 0 & \mbox{in} \; \Omega,\\
\frac{\partial (u_2-w)}{\partial \nu} + \gamma (u_2-w) \geq 0 & \mbox{on} \; \partial \Omega.
\end{cases} 
\end{equation*}

From now on, we can argue as in the proof of Proposition~\ref{prop:comparison} to show that $w(x)<u_2(x)$ in $\overline\Omega$. As the same reasoning can be applied to deduce that $u_2(x)<v(x)$ in $\overline\Omega$, we have obtain that $u_2=K(u_1)\in \mathcal{O}_1$.

Finally, as the set $\mathcal{O}_1$ is convex, given any $\psi\in \mathcal{O}_1$ we can assure that $sK(w) + (1-s)\psi \in \mathcal{O}_1$ for any $w\in \overline{\mathcal{O}_1}$ and any $s\in [0,1]$. In particular, $sK(w) + (1-s)\psi \neq w$ for any $w\in\partial \mathcal{O}_1$ and any $s\in [0,1]$. Thus, for any $s\in [0,1]$ the Leray-Schauder degree $\operatorname{deg}(I-[sK+(1-s)\psi], \mathcal{O}_1, 0)$ is well-defined and independent of $s$. Then, we have
\begin{equation*}
\operatorname{deg}(I-K, \mathcal{O}_1, 0) = \operatorname{deg}(I-\psi, \mathcal{O}_1, 0) = 1. \qedhere
\end{equation*}

\end{proof}

\subsection{A stability result}

Several times, we need to pass to the limit in problems like $(P_{\lambda,\gamma_n})$, where $\gamma_n$ can converge or diverge. Since we are only interested in nonnegative solutions whose maximum remains below $\beta$, we can take advantage of the fact that these solutions are always bounded in $\co$. While proving that the limit satisfies either~\eqref{eq:PbLambda} (if $\gamma_n$ converges) or~\eqref{eq:PbDirich} (if $\gamma_n$ diverges) is straightforward, showing the strong convergence in $\co$ is a more delicate issue. 

This last point is key for our study. Given a sequence of solutions to $(P_{\lambda,\gamma_n})$ contained in $\mathcal{O}_{\alpha\beta}$, the convergence in $\co$ ensures that the limit belongs to $\overline{\mathcal{O}_{\alpha\beta}}$. Then, by Corollary~\ref{cor:comparison}, one can conclude that the limit is actually in $\mathcal{O}_{\alpha\beta}$.

When $\gamma_n$ converges, the convergence in $\co$ follows from the fact that these solutions can be shown to be uniformly bounded in $C^{0,\eta}(\overline\Omega)$ by applying~\cite[Theorem~2.1]{Arc-Ros}. Instead, when $\gamma_n$ diverges, we employ a strategy introduced in~\cite{Arc-Rez-Sil}, which is based on the Stampacchia regularity method (\cite{Stamp}). To this end, we state the following lemma, whose proof can be found in~\cite[Lemma~4.1]{Stamp}.

\begin{lemma}
    \label{lem:Stamp}
    Let $k_0\geq 0$ and let $\Psi : [k_0, \infty) \to [0, \infty)$ be a nonincreasing function. If there exist $c, a > 0$ and $b > 1$ such that
    \[
    \Psi(h) \leq \frac{c}{(h - k)^a} [\Psi(k)]^b, \quad \forall h > k > k_0,
    \]
    then
    \[
    \Psi(k_0 + d) = 0, \quad \text{for } d^a := c 2^{\frac{ab}{b - 1}} [\Psi(k_0)]^{b - 1}.
    \]
\end{lemma}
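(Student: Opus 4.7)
The plan is to carry out the classical Stampacchia iteration argument. I would define a telescoping sequence of thresholds that converges to $k_0+d$ from below, apply the hypothesis along this sequence to obtain a recursive inequality on $\Psi(k_n)$, and then solve the recursion by an exponentially decaying ansatz. Since $\Psi$ is nonincreasing, passing to the limit along the sequence will pin down the value at $k_0+d$.

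Concretely, I would set
\[
k_n := k_0 + d\bigl(1 - 2^{-n}\bigr), \qquad n\in\N,
\]
so that $k_n$ is strictly increasing, $k_n \to k_0 + d$, and $k_{n+1}-k_n = d\,2^{-(n+1)}$. Applying the hypothesis with $h=k_{n+1}$ and $k=k_n$ yields
\[
\Psi(k_{n+1}) \;\leq\; \frac{c\,2^{a(n+1)}}{d^{a}}\,[\Psi(k_n)]^{b}.
\]
I would then prove by induction the estimate
\[
\Psi(k_n) \;\leq\; \Psi(k_0)\,2^{-\mu n} \quad\text{with}\quad \mu := \frac{a}{b-1}.
\]
Plugging the inductive hypothesis into the recursive inequality, the induction step reduces to the condition
\[
\frac{c}{d^{a}}\,[\Psi(k_0)]^{b-1} \;\leq\; 2^{-\mu - a}
\;=\; 2^{-\frac{ab}{b-1}},
\]
which is exactly the choice $d^{a} = c\,2^{ab/(b-1)}[\Psi(k_0)]^{b-1}$ appearing in the statement; the exponent $\mu = a/(b-1)$ is precisely the one that makes the $n$-dependent factor $2^{(\mu b - \mu - a)n}$ equal to $1$ for all $n$, so the bookkeeping in choosing $\mu$ is the only place where care is needed.

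Finally, since $\mu>0$, one has $\Psi(k_n)\to 0$ as $n\to\infty$, and because $\Psi$ is nonincreasing and $k_n\nearrow k_0+d$, monotonicity gives
\[
0 \;\leq\; \Psi(k_0+d) \;\leq\; \Psi(k_n) \;\longrightarrow\; 0,
\]
so $\Psi(k_0+d)=0$. I do not expect any serious obstacle: the only subtle point is tuning the decay rate $\mu$ so that the induction closes with precisely the constant $d$ given in the statement, and the monotonicity of $\Psi$ is what allows the passage to the limit from the discrete estimate at $k_n$ to the pointwise vanishing at $k_0+d$.
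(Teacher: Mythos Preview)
Your argument is correct and is precisely the classical Stampacchia iteration; the paper does not reproduce a proof but simply refers to \cite[Lemma~4.1]{Stamp}, where exactly this telescoping-threshold induction is carried out. One cosmetic point: the hypothesis is stated for $h>k>k_0$ (strict), so the first step with $k=k_0$ formally requires a limit $k\to k_0^+$, which is immediate from the monotonicity of $\Psi$.
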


Now, we are ready to prove the stability result.

\begin{lemma}
    \label{lem:stability}
    Assume that $f\colon \R\to \R$ is a Lipschitz function. Let $u_n\in C^2(\overline\Omega)$ be a solution of
    \begin{equation}
	\label{eq:Pf_Lem_stab}
	\begin{cases}
		-\Delta u_n = \lambda_n f(u_n) & \text{in } \Omega,\\
		\frac{\partial u_n}{\partial \nu} + \gamma_n u_n=0 & \text{on } \partial\Omega,
	\end{cases}
\end{equation}
    where $\lambda_n$ and $\gamma_n$ are sequences of nonnegative numbers. Assume that $\lambda_n\to \lambda\in [0,\infty)$ and that $\gamma_n\to \gamma \in [0,\infty]$. If $u_n$ is bounded in $\co$, then, up to a subsequence, $u_n\to u$ in $\co$, where $u\in C^2(\overline\Omega)$ is
    \begin{enumerate}[i)]
        \item a solution of~\eqref{eq:PbLambda} if $\gamma<\infty$,
        \item a solution of the Dirichlet problem~\eqref{eq:PbDirich} if $\gamma=\infty$.
    \end{enumerate}
\end{lemma}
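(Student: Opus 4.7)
My plan is to treat the two cases separately. In both, the uniform bound of $u_n$ in $\co$ combined with the (local) Lipschitz continuity of $f$ gives a uniform $L^\infty$ bound on $f(u_n)$. Inserting $\varphi=u_n$ itself into the weak formulation~\eqref{eq:weak_form} yields
\[
\into|\nabla u_n|^2+\gamma_n\intdo u_n^2=\lambda_n\into f(u_n)u_n\leq C,
\]
so $(u_n)$ is bounded in $H^1(\Omega)$ and admits a weakly convergent subsequence, while the boundary piece satisfies $\gamma_n\intdo u_n^2\leq C$.

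For case i), when $\gamma_n\to\gamma<\infty$, the Robin datum $\gamma_n u_n|_{\partial\Omega}$ stays uniformly bounded in $L^\infty(\partial\Omega)$, so~\cite[Theorem~2.1]{Arc-Ros} provides a uniform bound of $(u_n)$ in $C^{0,\eta}(\overline\Omega)$ for some $\eta\in(0,1)$. Arzelà-Ascoli then extracts a subsequence $u_n\to u$ in $\co$. Passing to the limit in the weak formulation with test functions $\varphi\in H^1(\Omega)$ is routine: the gradient term converges by weak $H^1$ convergence and the nonlinear and boundary terms by dominated convergence thanks to the uniform convergence. Hence $u$ is a weak solution of~\eqref{eq:PbLambda} with the limit parameters, and Remark~\ref{rem:weak} upgrades it to a classical $C^2(\overline\Omega)$ solution.

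For case ii), when $\gamma_n\to\infty$, two new difficulties appear. First, the energy estimate above already gives $\intdo u_n^2\leq C/\gamma_n\to 0$, so the trace converges to $0$ in $L^2(\partial\Omega)$, strongly suggesting that the limit must satisfy a Dirichlet condition. Second, the Hölder constant supplied by~\cite{Arc-Ros} a priori depends on $\gamma_n$ and could degenerate, so I plan to follow the Stampacchia-type strategy of~\cite{Arc-Rez-Sil}: testing the equation with truncations $(|u_n|-k)^+\operatorname{sgn}(u_n)$ for $k\geq 0$ produces a boundary term $\gamma_n\intdo u_n(|u_n|-k)^+\operatorname{sgn}(u_n)\geq 0$ which can be dropped, yielding an energy inequality whose constants no longer involve $\gamma_n$. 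A Sobolev-Hölder iteration based on Lemma~\ref{lem:Stamp} then delivers $C^{0,\eta}(\overline\Omega)$ estimates uniform in $\gamma_n$. By Arzelà-Ascoli, $u_n\to u$ in $\co$ up to a subsequence, and a covering argument combining uniform Hölder continuity with the $L^2(\partial\Omega)$ vanishing forces $u\equiv 0$ on $\partial\Omega$ (otherwise $|u_n|$ would be bounded below by a positive constant on a fixed boundary ball, contradicting the $L^2$ decay). Finally, restricting test functions to $\varphi\in H^1_0(\Omega)$ kills the boundary term, and passing to the limit shows that $u$ is a weak solution of~\eqref{eq:PbDirich}, hence classical by the Dirichlet analogue of Remark~\ref{rem:weak}.

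The main obstacle is the uniform $C^{0,\eta}(\overline\Omega)$ estimate in case ii): the blow-up of $\gamma_n$ makes it essential to exploit the sign of the Robin boundary term in the Stampacchia estimate so that the resulting constants depend only on $\|u_n\|_\infty$, $\|f\|_\infty$ and the geometry of $\Omega$. Once this is in hand, everything else reduces to standard compactness and passage-to-the-limit arguments.
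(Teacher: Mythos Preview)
Your treatment of case~i) matches the paper's exactly. The gap is in case~ii), in the step where you claim that testing with $(|u_n|-k)^+\operatorname{sgn}(u_n)$ and iterating via Lemma~\ref{lem:Stamp} ``delivers $C^{0,\eta}(\overline\Omega)$ estimates uniform in $\gamma_n$''. It does not. That global truncation argument controls the measure of the superlevel sets $\{|u_n|>k\}$ and produces a bound on $\|u_n\|_{L^\infty(\Omega)}$---which you already have by hypothesis---but it carries no information about oscillation on small balls and therefore yields no H\"older modulus. Obtaining a genuine $C^{0,\eta}$ bound up to the boundary that is uniform as $\gamma_n\to\infty$ would require a De~Giorgi--Moser iteration localised near $\partial\Omega$, and it is far from clear that the boundary Caccioppoli inequality needed there can be made independent of $\gamma_n$; the constant in~\cite[Theorem~2.1]{Arc-Ros} depends on $\|\gamma_n u_n\|_{L^\infty(\partial\Omega)}$, which you have no way to control.

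The paper sidesteps this obstacle entirely. Rather than seeking uniform H\"older bounds on $u_n$, it first identifies the weak limit $u\in H^1_0(\Omega)$ and shows it solves~\eqref{eq:PbDirich} (this part you also have). Then, crucially, it applies the Stampacchia truncation not to $u_n$ but to the \emph{difference} $u_n-u$: one tests the equation for $u_n$ and the equation for $u$ with $G_k(u_n-u)$, subtracts, and uses that $u\equiv 0$ on $\partial\Omega$ to turn the Robin boundary term into $\gamma_n\intdo(u_n-u)G_k(u_n-u)\geq\gamma_0\intdo G_k(u_n-u)^2$, which is kept rather than dropped. The resulting level-set inequality, fed into Lemma~\ref{lem:Stamp} with $k_0=\varepsilon$, gives $\|u_n-u\|_{\co}\leq 2\varepsilon$ for large $n$ directly---no Arzel\`a--Ascoli and no H\"older estimate are needed. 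The key idea you are missing is to work with $u_n-u$ once the limit $u$ is known.
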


\begin{proof}
First of all, we can take $u_n$ as test function in the weak formulation of~\eqref{eq:Pf_Lem_stab} (see~\eqref{eq:weak_form}) to obtain that
\begin{equation}
    \label{eq:Pf_Lem_stab_1}
    \into |\nabla u_n|^2 + \gamma_n \intdo u_n^2  = \lambda_n\into f(u_n)u_n.
\end{equation}
As $f(u_n)u_n$ is bounded in $L^\infty(\Omega)$, we get that $\into |\nabla u_n|^2<C$. Since $u_n$ is also bounded in $L^\infty(\Omega)$, then we have that $u_n$ is bounded in $H^1(\Omega)$. Therefore, $u_n$ has a subsequence, still denoted by $u_n$, such that $u_n\rightharpoonup u$ weakly in $H^1(\Omega)$ to some $u\in H^1(\Omega)$. It also holds that $u_n\to u$ in $L^1(\Omega)$ and that $u_n\to u$ $\ae$ in $\Omega$.

From now on, we divide the proof into two parts.
\smallskip

\textbf{Case $\boldsymbol{\gamma_n\to \gamma<\infty}$:}

Here, we can easily pass to the limit in the weak formulation of~\eqref{eq:Pf_Lem_stab} to show that $u$ is a weak (and then a classical) solution of~\eqref{eq:PbLambda}.

To show the convergence of a subsequence of $u_n$ to $u$ in $\co$, we take advantage of the fact that $-\gamma_n u_n$ is bounded in $L^\infty(\partial\Omega)$. Then, applying~\cite[Theorem~2.1]{Arc-Ros}, we deduce that the sequence $u_n$ is bounded in $C^{0,\eta}(\overline\Omega)$ for any $\eta\in(0,1)$. Consequently, Arzel\`{a}-Ascoli Theorem ensures the existence of a subsequence of $u_n$ that converges to $u$ in $\co$.
\smallskip

\textbf{Case $\boldsymbol{\gamma_n\to \infty}$:}

Dropping a nonnegative term in~\eqref{eq:Pf_Lem_stab_1} and using that $f(u_n)u_n$ is bounded in $L^\infty(\Omega)$, we deduce that 
\[
\gamma_n \intdo u_n^2  < C.
\]
As $\gamma_n\to \infty$, we obtain that $\intdo u_n^2  \to 0$. Therefore, $u_n\to 0$ in $L^2(\partial \Omega)$ and, as the trace embedding $H^1(\Omega) \hookrightarrow L^2(\partial\Omega)$ is compact, then the trace of $u$ is $0$ and thus $u\in \huz$. From the weak formulation of~\eqref{eq:Pf_Lem_stab}, we have that
\[
\into \nabla u_n \nabla \varphi = \lambda_n \into f(u_n)\varphi,\ \forall \varphi\in \huz.
\]
Now we can easily pass to the limit to obtain that $u$ is a weak (and then a classical) solution of the Dirichlet problem~\eqref{eq:PbDirich}.

It remains for us to show that, up to a subsequence, $u_n$ converges in $\co$ to $u$. Since we make use of the Sobolev embeddings, we present the proof only for the case $N\geq 3$; the case $N=2$ can be treated similarly, while for $N=1$ the result follows directly from Morrey’s Theorem. We can assume that there is some $\gamma_0>0$ such that $\gamma_n\geq \gamma_0$ for every $n\in\N$. Recall that, in $\hu$, the norm defined by
\[
\|v\|_{\hu} := \left(\into |\nabla v|^2 + \gamma_0 \intdo v^2 \right)^\frac{1}{2}, \ v\in\hu,
\]
is equivalent to the usual norm and, consequently, the classical embeddings for $\hu$ remain valid with this norm. In particular, recall that the embeddings $H^1(\Omega) \hookrightarrow L^p(\Omega)$ and $H^1(\Omega) \hookrightarrow L^q(\partial\Omega)$ are continuous for any $p\in \left[1,\frac{2N}{N-2} \right]$ and $q\in \left[1, \frac{2(N-1)}{N-2}\right]$ (see~\cite[Chapter~2]{Necas}).

In what follows, we adapt an idea of~\cite{Arc-Rez-Sil}, which is based on the Stampacchia regularity method. For any $k>0$, we define the function $G_k(s):=\max\{\min\{s+k,0\},s-k\}$. From this point on, we fix $k>0$ and $n\in\N$. Taking $G_k(u_n-u)\in \hu$ as test function in the problem satisfied by $u_n$ (i.e., in~\eqref{eq:Pf_Lem_stab}), we obtain
\begin{equation}
    \label{eq:Pf_Lem_stab_2}
    \into \nabla u_n \nabla G_k(u_n-u) + \intdo \gamma_n u_n G_k(u_n-u)  = \into \lambda_nf(u_n) G_k(u_n-u).
\end{equation}
Now, we multiply the equation in~\eqref{eq:PbDirich} by $G_k(u_n-u)$ and, since $u\in H^2(\Omega)$ (in fact, $u\in C^2(\overline \Omega)$), we can apply the divergence theorem to obtain
\begin{equation}
    \label{eq:Pf_Lem_stab_3}
    \into \nabla u \nabla G_k(u_n-u) - \intdo \frac{\partial u}{\partial \nu} G_k(u_n-u)  = \into \lambda f(u) G_k(u_n-u).
\end{equation}
Subtracting both~\eqref{eq:Pf_Lem_stab_2} and~\eqref{eq:Pf_Lem_stab_3} we arrive at
\begin{equation}
    \label{eq:Pf_Lem_stab_4}
    \into |\nabla G_k(u_n-u)|^2 + \intdo \gamma_n u_n G_k(u_n-u)  = \into (\lambda_nf(u_n)-\lambda f(u)) G_k(u_n-u) - \intdo \frac{\partial u}{\partial \nu} G_k(u_n-u).
\end{equation}

With respect to the left-hand side of~\eqref{eq:Pf_Lem_stab_4}, as $u\equiv 0$ in $\partial\Omega$ and $sG_k(s)\geq G_k(s)^2$ for all $s\in\R$, we easily deduce that
\begin{equation}
\begin{split}
    \label{eq:Pf_Lem_stab_5}
    \into |\nabla G_k(u_n-u)|^2 + \intdo \gamma_n u_n G_k(u_n-u) &= \into |\nabla G_k(u_n-u)|^2 + \intdo \gamma_n (u_n-u) G_k(u_n-u)\\
    &\geq \into |\nabla G_k(u_n-u)|^2 + \intdo \gamma_0 G_k(u_n-u)^2\\
    &= \|G_k(u_n-u)\|_{H^1(\Omega)}^2.
\end{split}
\end{equation}

To simplify the computations related to the right-hand side of~\eqref{eq:Pf_Lem_stab_4}, we introduce some notation. Let
\[
A_n(k):= \{x\in\Omega: |u_n(x)-u(x)|>k\}, \qquad B_n(k):= \{x\in\partial\Omega: |u_n(x)-u(x)|>k\}.
\]
Since no ambiguity can arise, we use the same symbol $|\cdot|$ to denote both the $N$-dimensional Lebesgue measure (for subsets of $\Omega$) and the $(N-1)$-dimensional Hausdorff measure (for subsets of $\partial\Omega$). We also set $q:= \frac{2(N-1)}{N-2}$.

Regarding the third integral in~\eqref{eq:Pf_Lem_stab_4}, since the term $\lambda_nf(u_n) - \lambda f(u)$ is bounded in $L^\infty(\Omega)$, say by $C_1>0$, we can apply H\"{o}lder's inequality, the continuous embedding $H^1(\Omega) \hookrightarrow L^q(\Omega)$ and Young's inequality to deduce
\begin{equation}
\begin{split}
    \label{eq:Pf_Lem_stab_6}
    \into (\lambda_nf(u_n)-\lambda f(u)) G_k(u_n-u) &\leq C_1 \into |G_k(u_n-u)| = C_1 \int_{A_n(k)} |G_k(u_n-u)|\\
    &\leq C_1 |A_n(k)|^{\frac{1}{q'}} \|G_k(u_n-u)\|_{L^q(\Omega)} \leq C_2 |A_n(k)|^{\frac{1}{q'}} \|G_k(u_n-u)\|_{H^1(\Omega)}\\
    &\leq \frac{1}{4}\|G_k(u_n-u)\|_{H^1(\Omega)}^2 + C_3|A_n(k)|^{\frac{2}{q'}},
\end{split}
\end{equation}
for some constants $C_2,C_3>0$. A similar argument applies to the last integral in~\eqref{eq:Pf_Lem_stab_4}, since $\frac{\partial u}{\partial \nu}\in L^\infty(\partial\Omega)$, although in this case we rely on the continuous embedding $H^1(\Omega) \hookrightarrow L^q(\partial\Omega)$. Reasoning as before, we obtain for some constant $C_4>0$ that
\begin{equation}
    \label{eq:Pf_Lem_stab_7}
    - \intdo \frac{\partial u}{\partial \nu} G_k(u_n-u) \leq \frac{1}{4}\|G_k(u_n-u)\|_{H^1(\Omega)}^2 + C_4|B_n(k)|^{\frac{2}{q'}}.
\end{equation}

Returning to the equality~\eqref{eq:Pf_Lem_stab_4}, we can now combine~\eqref{eq:Pf_Lem_stab_5},~\eqref{eq:Pf_Lem_stab_6} and~\eqref{eq:Pf_Lem_stab_7} to obtain that
\begin{equation}
    \label{eq:Pf_Lem_stab_8}
    \|G_k(u_n-u)\|_{H^1(\Omega)}^2 \leq 2C_3|A_n(k)|^{\frac{2}{q'}} + 2C_4|B_n(k)|^{\frac{2}{q'}} \leq C_5 \left(|A_n(k)|^{\frac{2}{q'}} + |B_n(k)|^{\frac{2}{q'}} \right),
\end{equation}
where $C_5>0$. Using the Sobolev embeddings $H^1(\Omega) \hookrightarrow L^q(\Omega)$ and $H^1(\Omega) \hookrightarrow L^q(\partial\Omega)$, we can find some $C_6>0$ such that
\begin{equation}
    \label{eq:Pf_Lem_stab_9}
    \|G_k(u_n-u)\|_{H^1(\Omega)}^2 \geq C_6 \left( \|G_k(u_n-u)\|_{L^q(\Omega)}^2 + \|G_k(u_n-u)\|_{L^q(\partial\Omega)}^2 \right).
\end{equation}
Observe that, if we take $h>k>0$, then $A_n(h)\subseteq A_n(k)$ and $B_n(h)\subseteq B_n(k)$. Taking into account that $|G_k(s)|\geq h-k$ for any $s\in\R$ such that $|s|\geq h$, we deduce that
\begin{equation}
\begin{split}
    \label{eq:Pf_Lem_stab_10}
    & \|G_k(u_n-u)\|_{L^q(\Omega)}^2 = \left(\int_{A_n(k)} |G_k(u_n-u)|^q\right)^\frac{2}{q} \geq (h-k)^2 |A_n(h)|^\frac{2}{q},\\
    & \|G_k(u_n-u)\|_{L^q(\partial\Omega)}^2 = \left(\int_{B_n(k)} |G_k(u_n-u)|^q\right)^\frac{2}{q} \geq (h-k)^2 |B_n(h)|^\frac{2}{q}.
\end{split}
\end{equation}
Joining~\eqref{eq:Pf_Lem_stab_8},~\eqref{eq:Pf_Lem_stab_9} and~\eqref{eq:Pf_Lem_stab_10}, we obtain
\begin{equation}
    \label{eq:Pf_Lem_stab_11}
    C_6 (h-k)^2 \left( |A_n(h)|^\frac{2}{q} + |B_n(h)|^\frac{2}{q} \right) \leq C_5 \left(|A_n(k)|^{\frac{2}{q'}} + |B_n(k)|^{\frac{2}{q'}} \right).
\end{equation}
Recall that for any $s,t\geq0$ and $p>0$, the following inequality holds: $\min\{1,2^{p-1}\}(s^p+t^p) \leq (s+t)^p \leq \max\{1,2^{p-1}\}(s^p+t^p)$. Applying this algebraic inequality to~\eqref{eq:Pf_Lem_stab_11}, we deduce for some $C_7>0$ that
\begin{equation}
    \label{eq:Pf_Lem_stab_12}
    |A_n(h)| + |B_n(h)| \leq \frac{C_7}{(h-k)^q} \left(|A_n(k)| + |B_n(k)| \right)^{\frac{q}{q'}}.
\end{equation}

Let $\varepsilon>0$ fixed. We consider the function $\Psi_{n}\colon [0,\infty)\to [0,\infty)$ defined as $\Psi_{n}(k):=|A_n(k)| + |B_n(k)|$ for any $k\geq 0$. As $\Psi_{n}$ is nonincreasing, due to relation~\eqref{eq:Pf_Lem_stab_12} we can apply Lemma~\ref{lem:Stamp} with $k_0=\varepsilon$, $a=q =\frac{2(N-1)}{N-2}>0$, $c=C_7>0$ and $b=\frac{q}{q'}=\frac{N}{N-2}>1$ to obtain, for every $n\in\na$, that
\begin{equation}
    \label{eq:Pf_Lem_stab_13}
    \Psi_{n}(\varepsilon + d_{n}) = |A_n(\varepsilon + d_{n})| + |B_n(\varepsilon + d_{n})| = 0, \text{ with } d_{n}^a = c2^\frac{ab}{b-1} \left(|A_{n}(\varepsilon)| + |B_{n}(\varepsilon)| \right)^{b-1}.
\end{equation}
Notice that, by the definition of $A_{n}(k)$ and $B_n(k)$, for every $n\in\N$ we have
\[
\varepsilon|A_{n}(\varepsilon)| \leq \int_{A_{n}(\varepsilon)} |u_n-u| \leq \|u_n-u\|_{L^1(\Omega)} \qquad \text{and} \qquad \varepsilon|B_{n}(\varepsilon)| \leq \int_{B_{n}(\varepsilon)} |u_n-u| \leq \|u_n-u\|_{L^1(\partial\Omega)}.
\]
Since $u_n$ converges strongly to $u$ in $\luo$ and in $L^1(\partial\Omega)$, we can find $n_0\in\na$ (depending on $\varepsilon$) such that $d_{n}<\varepsilon$ for all $n\geq n_0$. As $\Psi_{n}$ is nonincreasing, from~\eqref{eq:Pf_Lem_stab_13} we deduce that $|A_n(2\varepsilon)| + |B_n(2\varepsilon)| = 0$ for every $n\geq n_0$ or, equivalently, that
\[
\|u_n-u\|_\co \leq 2\varepsilon, \ \forall n\geq n_0.
\]
Then, we can conclude that $u_n$ converges strongly in $\co$ to $u$, where $u$ is the solution that we have found for~\eqref{eq:PbDirich}.
\end{proof}

\begin{remark}
    In Lemma~\ref{lem:stability} above, we have assumed that $f\colon \R\to\R$ is Lipschitz continuous for simplicity, in order to ensure the equivalence between classical and bounded weak solutions. However, the same statement remains valid in the setting of bounded weak solutions under the weaker assumption that $f$ is merely continuous.
\end{remark}

\begin{remark}
    Up to our knowledge, the result concerning the case $\gamma_n\to \infty$ is completely new in the literature. Only in the particular case $N=3$, this result can also be deduced from~\cite{Amrouche}, where the authors establish uniform $W^{1,p}$-estimates for $u_n$ using quite different techniques.
\end{remark}

\section{An auxiliary problem}
\label{sec:aux}

Throughout this section, we assume that $f\colon \R\to\R$ is a locally Lipschitz function with two consecutive zeros, $0<\alpha<\beta$, such that $f(s)>0$ when $s\in (\alpha,\beta)$. Here, we truncate $f$ to define the continuous function
\begin{equation*}
\tilde f(s)=\begin{cases}
f(s) & \text{ if }  s\in (\alpha,\beta), \\
0 & \text{ if } s\not\in (\alpha,\beta),\\
\end{cases} 
\end{equation*}
and we study some properties of the solutions to problem
\begin{equation}
\label{eq:PbAux}
\begin{cases}
-\Delta u = \lambda\tilde f(u) & \mbox{in} \; \Omega,\\
\frac{\partial u}{\partial \nu} + \gamma u = 0 & \mbox{on} \; \partial \Omega.
\end{cases} 
\end{equation}

For any fixed $\gamma>0$, our aim is to prove that the maximal solution $\ou_\lambda$ to~\eqref{eq:PbAux} in the interval $[0,\beta]$ converges uniformly in $\overline\Omega$ to $\beta$ as $\lambda$ diverges. Therefore, for large values of $\lambda$, $\ou_\lambda$ will be also a solution of~\eqref{eq:PbLambda}.

We start proving the existence of $\ou_\lambda$ for large $\lambda$ using variational methods. We also prove that $\ou_\lambda$ is an increasing sequence in $\lambda$, in the sense that $\ou_\lambda(x)$ is an increasing sequence of real numbers for every $x\in \overline\Omega$.

\begin{lemma}
    \label{lem:exist_gam_fix}
    Let $\gamma>0$ be fixed. Then, there exists some $\overline \lambda(\gamma)>0$ such that a positive maximal solution $\ou_\lambda$ to $\eqref{eq:PbAux}$ in the interval $[0,\beta]$ exists for any $\lambda>\overline\lambda$. Furthermore, $\ou_\lambda(x)$ is increasing in $\lambda$ for any $x\in\overline\Omega$.
\end{lemma}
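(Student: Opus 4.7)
The strategy is variational: I would exhibit a nontrivial minimizer of the natural energy for large $\lambda$, and then extract the maximal solution by a monotone iteration starting at $\beta$. Set $\tilde F(s):=\int_0^s \tilde f(t)\,dt$; because $\tilde f$ vanishes outside $(\alpha,\beta)$ and is positive inside, $\tilde F$ is bounded, nonnegative, identically zero on $(-\infty,\alpha]$, and equal to the constant $C_F:=\int_\alpha^\beta f(t)\,dt>0$ on $[\beta,\infty)$. The functional
\[
J_\lambda(u):=\frac{1}{2}\into |\nabla u|^2+\frac{\gamma}{2}\intdo u^2-\lambda\into \tilde F(u),\qquad u\in \hu,
\]
is then coercive (the quadratic part defines a norm equivalent to $\|\cdot\|_{\hu}$ because $\gamma>0$, while $\tilde F$ is bounded) and weakly lower semicontinuous (Rellich's theorem and the compactness of the trace make the nonlinear and boundary terms weakly continuous). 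Hence $J_\lambda$ attains its infimum at some $u_\lambda\in\hu$.

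To force $u_\lambda\not\equiv 0$, I would test with the constant $\beta$: one computes $J_\lambda(\beta)=\frac{\gamma\beta^2}{2}|\partial\Omega|-\lambda C_F|\Omega|$, which is strictly negative as soon as $\lambda>\overline\lambda(\gamma):=\gamma\beta^2|\partial\Omega|/(2 C_F|\Omega|)$. Together with $J_\lambda(0)=0$, this yields a nontrivial minimizer. Because $\tilde F$ vanishes on $(-\infty,0]$ and is constant on $[\beta,\infty)$, replacing $u_\lambda$ by $\min\{u_\lambda^+,\beta\}$ strictly decreases (or leaves unchanged) $\into|\nabla\cdot|^2$ and $\intdo(\cdot)^2$ without altering $\into \tilde F(\cdot)$, so we may assume $0\le u_\lambda\le \beta$. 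As a bounded weak solution of~\eqref{eq:PbAux}, Remark~\ref{rem:weak} promotes $u_\lambda$ to a classical solution.

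To isolate the maximal solution, I would iterate the operator $K$ from~\eqref{eq:def_K} starting at $v_0\equiv\beta$, which is a strict supersolution of~\eqref{eq:PbAux} since $-\Delta\beta=0=\lambda\tilde f(\beta)$ and $\frac{\partial\beta}{\partial\nu}+\gamma\beta=\gamma\beta>0$. Arguing exactly as in the proof of Lemma~\ref{lem:LS_2}, the monotonicity encoded in~\eqref{eq:f_incr} together with Proposition~\ref{prop:comparison} force $v_{n+1}:=K(v_n)$ to be strictly decreasing in $[0,\beta]$. Uniform Schauder estimates (the right-hand side $\lambda(\tilde f(v_n)+M v_n)$ stays bounded in $\lio$) then allow me to pass to the limit in $C^2(\overline\Omega)$ and obtain a classical solution $\ou_\lambda\in[0,\beta]$ of~\eqref{eq:PbAux}. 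Moreover, for any solution $u$ of~\eqref{eq:PbAux} with $0\le u\le \beta$, the monotonicity of $K$ gives $u=K^n(u)\le K^n(\beta)=v_n$ inductively, whence $u\le \ou_\lambda$; applied to $u_\lambda$ this forces $\ou_\lambda\not\equiv 0$, and Proposition~\ref{prop:comparison} with $w\equiv 0$ upgrades it to $\ou_\lambda>0$ on $\overline\Omega$.

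Finally, for the monotonicity in $\lambda$, fix $\overline\lambda<\lambda_1<\lambda_2$. Since $\tilde f\ge 0$,
\[
-\Delta\ou_{\lambda_1}=\lambda_1\tilde f(\ou_{\lambda_1})\le \lambda_2\tilde f(\ou_{\lambda_1}),
\]
so $\ou_{\lambda_1}$ is a subsolution of~\eqref{eq:PbAux} at level $\lambda_2$, while $\beta\ge\ou_{\lambda_1}$ is a supersolution. The sub/supersolution method produces a solution $v\in[\ou_{\lambda_1},\beta]$ at level $\lambda_2$, and the maximality established above forces $v\le\ou_{\lambda_2}$, hence $\ou_{\lambda_1}\le\ou_{\lambda_2}$. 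The most delicate point I anticipate is ensuring that the monotone iteration genuinely delivers a $C^2$-solution that dominates every nonnegative element of the solution set in $[0,\beta]$; this rests on the strict inequality in Proposition~\ref{prop:comparison}, the monotonicity of $K$ coming from~\eqref{eq:f_incr}, and the uniform regularity promoted by Remark~\ref{rem:weak}.
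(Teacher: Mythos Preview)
Your argument is correct and follows essentially the same route as the paper: a variational step (coercive, weakly lower semicontinuous functional, tested at $\beta$ to rule out the trivial critical point for $\lambda$ large), followed by a monotone iteration from the supersolution $\beta$ to extract the maximal solution, and then the observation that $\tilde f\ge 0$ makes $\ou_{\lambda_1}$ a subsolution at level $\lambda_2$. The only minor gap is that you conclude $\ou_{\lambda_1}\le\ou_{\lambda_2}$ rather than strict inequality; the paper obtains $\ou_{\lambda_1}<\ou_{\lambda_2}$ in $\overline\Omega$ by noting that $\ou_{\lambda_1}$ is a \emph{strict} subsolution at level $\lambda_2$ (its maximum lies in $(\alpha,\beta)$, so $\tilde f(\ou_{\lambda_1})>0$ somewhere) and invoking Proposition~\ref{prop:comparison}.
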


\begin{proof}
We point out that every nontrivial solution of~\eqref{eq:PbAux} is positive and has its maximum in the interval $(\alpha, \beta)$ due to Proposition~\ref{prop:comparison} and Corollary~\ref{cor:comparison}. In addition, solutions of~\eqref{eq:PbAux} correspond to critical points of the energy functional
\begin{equation*}
    \tilde{I}_{\lambda,\gamma}(u) = \frac{1}{2} \into |\nabla u|^2 + \frac{\gamma}{2} \intdo u^2  - \lambda\into \tilde{F}(u),\ \forall u\in\hu,
\end{equation*}
where $\tilde{F}(s) = \int_0^s \tilde{f}(t)\ \mathrm{d}t$.  Since $\tilde F$ is continuous and bounded, the functional $\tilde I_\gamma$ is coercive and weakly lower semicontinuous. Therefore, we can deduce that $\tilde I_\gamma$ has a global minimum. Now, we prove that this minimum is not reached at the trivial function for large $\lambda$. Indeed, taking into account that $\tilde F(\beta)>0$, one has
\[
\tilde I_{\lambda,\gamma} (\beta) = \gamma \frac{\beta^2 |\partial \Omega|}{2} - \lambda|\Omega| \tilde F(\beta) := C_1\gamma - C_2 \lambda,
\]
where $C_1,C_2>0$, so one can take 
\begin{equation*}
\overline\lambda>\frac{C_1\gamma}{C_2}
\end{equation*}
and then for every $\lambda > \overline\lambda$ one has $\tilde I_{\lambda,\gamma} (\beta)<0=\tilde I_{\lambda,\gamma} (0)$.

Then, we have proved the existence of a positive solution to~\eqref{eq:PbAux} with maximum in $(\alpha,\beta)$. As $\beta$ is always a supersolution of~\eqref{eq:PbAux}, the existence of a maximal solution $\ou_\lambda$ of~\eqref{eq:PbAux} in the interval $[0,\beta]$ can be deduced using a standard iterative scheme starting from $\beta$.

The fact that $\ou_\lambda$ is increasing in $\lambda$ can be easily deduced since $\tilde f\geq 0$. Indeed, given $\lambda_1<\lambda_2$, one has that $\ou_{\lambda_1}$ is a strict subsolution of~\eqref{eq:PbAux} with $\lambda=\lambda_2$, so the maximality of $\ou_{\lambda_2}$ and Proposition~\ref{prop:comparison} imply that $\ou_{\lambda_1}(x) < \ou_{\lambda_2}(x)$ for any $x\in\overline\Omega$.
\end{proof}

As mentioned earlier, our aim is to show that $\ou_\lambda\to \beta$ uniformly in $\overline\Omega$ when $\lambda\to\infty$. We begin by proving this result when $\Omega$ is a ball $B_R(0)$ of radius $R>0$.

\begin{lemma}
\label{lem:ball}
For any $\varepsilon\in (0,\beta)$, there exists some $\overline\lambda (\varepsilon)$ such that the Dirichlet problem
\begin{equation}
\label{eq:PbAuxDirBall}
\begin{cases}
-\Delta w = \lambda\tilde f(w) & \mbox{in} \; B_R(0),\\
w = \beta-\varepsilon & \mbox{on} \; \partial B_R(0),\\
w(x)\in (\beta-\varepsilon, \beta) & \mbox{for any} \; x\in B_R(0),
\end{cases} 
\end{equation}
has a radial solution $w_\lambda\in C^2(\overline \Omega)$ for any $\lambda>\overline\lambda$. Moreover, $w_\lambda'(R)\to -\infty$ as $\lambda\to\infty$.
\end{lemma}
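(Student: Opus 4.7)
The plan is variational: I construct $w_\lambda$ as a minimizer of a coercive truncated energy, use the resulting variational estimate to pin down its pointwise limit on compacta as $\lambda\to\infty$, and finally exploit a first integral of the radial ODE together with a layer-width estimate to quantify the boundary gradient.

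Working in the affine radial space $\mathcal{A}=\{u\in H^1_{\mathrm{rad}}(B_R(0)): u-(\beta-\varepsilon)\in H^1_0(B_R(0))\}$, I minimize
\[
J_\lambda(u)=\tfrac{1}{2}\int_{B_R}|\nabla u|^2-\lambda\int_{B_R}\tilde{F}(u),\qquad\tilde F(s):=\int_0^s\tilde f(t)\,dt.
\]
Since $\tilde F$ is continuous and bounded by $\tilde F(\beta)$, $J_\lambda$ is weakly lower semicontinuous and coercive on $\mathcal A$, producing a minimizer $w_\lambda$. Replacing $w_\lambda$ by $\min\{\max\{w_\lambda,\beta-\varepsilon\},\beta\}$ does not raise $J_\lambda$ (the Dirichlet integral cannot increase, and $\tilde F$ is unaffected outside $[\alpha,\beta]$), so I may assume $\beta-\varepsilon\le w_\lambda\le\beta$. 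Testing with a radial cutoff $\phi_\delta$ equal to $\beta$ on $B_{R-\delta}(0)$ and linearly interpolating to $\beta-\varepsilon$ on the shell $B_R\setminus B_{R-\delta}(0)$ yields $J_\lambda(\phi_\delta)-J_\lambda(\beta-\varepsilon)\le C/\delta-\lambda[\tilde F(\beta)-\tilde F(\beta-\varepsilon)]|B_{R-\delta}(0)|$, which is negative for $\lambda$ larger than some $\overline\lambda(\varepsilon)$; hence $w_\lambda\not\equiv\beta-\varepsilon$. Schauder regularity places $w_\lambda$ in $C^2(\overline{B_R(0)})$, and Proposition~\ref{prop:comparison} applied separately to $w_\lambda$ versus the constants $\beta-\varepsilon$ and $\beta$ gives the strict inequalities $\beta-\varepsilon<w_\lambda<\beta$ in $B_R(0)$.

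Substituting the same $\phi_\delta$ into $J_\lambda(w_\lambda)\le J_\lambda(\phi_\delta)$, letting $\lambda\to\infty$ with $\delta$ fixed and then $\delta\to 0^+$, forces $\int_{B_R}\tilde F(w_\lambda)\to\tilde F(\beta)|B_R(0)|$. Together with $\tilde F(w_\lambda)\le\tilde F(\beta)$ pointwise and the strict monotonicity of $\tilde F$ on $[\alpha,\beta]$, this yields $w_\lambda\to\beta$ in $L^1$, and the radial decreasingness of $w_\lambda$ upgrades this to uniform convergence on $[0,R-\delta]$ for every $\delta>0$; in particular $w_\lambda(0)\to\beta$.

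For the boundary gradient, integrating the radial form $(r^{N-1}w_\lambda')'=-\lambda r^{N-1}\tilde f(w_\lambda)$ on $[0,R]$ produces
\[
|w_\lambda'(R)|=\frac{\lambda}{R^{N-1}}\int_0^R s^{N-1}\tilde f(w_\lambda(s))\,ds,
\]
while multiplying the ODE by $w_\lambda'$ and integrating on $[0,r]$ gives the first integral
\[
\tfrac{1}{2}(w_\lambda'(r))^2+\lambda\tilde F(w_\lambda(r))+\int_0^r\tfrac{N-1}{s}(w_\lambda'(s))^2\,ds=\lambda\tilde F(w_\lambda(0)),
\]
hence the a priori gradient bound $\|w_\lambda'\|_\infty\le\sqrt{2\lambda\tilde F(\beta)}$. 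Fix $c_1<c_0$ with $\max\{\alpha,\beta-\varepsilon\}<c_1<c_0<\beta$, so that $m:=\min_{[c_1,c_0]}\tilde f>0$; for $\lambda$ large, continuity and monotonicity produce radii $r_0<r_1$ with $w_\lambda(r_0)=c_0$ and $w_\lambda(r_1)=c_1$ (both exist because $w_\lambda(0)\to\beta>c_0$ and $w_\lambda(R)=\beta-\varepsilon<c_1$). On $[r_0,r_1]$ one has $\tilde f(w_\lambda)\ge m$; the $L^\infty$ gradient bound gives the layer-width estimate $r_1-r_0\ge(c_0-c_1)/\sqrt{2\lambda\tilde F(\beta)}$, while the previous paragraph gives $r_0\to R$, so
\[
|w_\lambda'(R)|\ge\frac{\lambda}{R^{N-1}}\,m\,r_0^{N-1}(r_1-r_0)\ge C\sqrt\lambda,
\]
which tends to $+\infty$. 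The delicate point is identifying the correct boundary-layer scale $1/\sqrt\lambda$: the a priori upper bound on $|w_\lambda'|$ and the lower bound on the layer width both flow from the same first-integral identity, and it is this self-matching that makes the resulting estimate tight enough to conclude.
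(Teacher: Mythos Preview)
Your argument is correct and self-contained, but it follows a different route than the paper. The paper first translates the problem via $v=w-(\beta-\varepsilon)$ to a zero-Dirichlet problem, then \emph{cites} \cite{B-GM-I} for the existence of the maximal solution and its uniform convergence to $\varepsilon$ on compacta, and \cite{G-Ni-N} for radial symmetry and monotonicity. For the boundary gradient it invokes the radial Pohozaev identity
\[
\bigl(v_\lambda'(R)\bigr)^2=\frac{2\lambda}{R^{N}}\int_0^R r^{N-1}\Bigl(NG(v_\lambda)-\tfrac{N-2}{2}\,v_\lambda\,g(v_\lambda)\Bigr)\,dr,
\]
and then shows the integrand is bounded below by a fixed positive constant on a set of radii whose measure is eventually almost $R$, using only that the bracket equals $NG(\varepsilon)>0$ at the limit profile. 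Both approaches yield the same $|w_\lambda'(R)|\gtrsim\sqrt{\lambda}$ rate.

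Your route trades citations for direct arguments: the minimization in $H^1_{\mathrm{rad}}$ gives existence and radiality without Gidas--Ni--Nirenberg, the energy comparison with $\phi_\delta$ replaces the black-box convergence from \cite{B-GM-I}, and your first-integral bound $\|w_\lambda'\|_\infty\le\sqrt{2\lambda\tilde F(\beta)}$ together with the layer-width estimate replaces Pohozaev. This is more elementary and arguably more robust (no structural identity is needed), at the cost of a few more lines. Two small points worth tightening: the radial decrease of $w_\lambda$, which you use twice, deserves a one-line justification (from $(r^{N-1}w_\lambda')'=-\lambda r^{N-1}\tilde f(w_\lambda)\le 0$ and $w_\lambda'(0)=0$); and Proposition~\ref{prop:comparison} as stated is for the Robin problem, so for the strict interior inequalities you should invoke the strong maximum principle directly rather than that proposition.
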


\begin{remark}
This result immediately implies that $\ou_\lambda\to \beta$ uniformly in $\overline B_R(0)$ when $\lambda\to\infty$, where $\ou_\lambda$ denotes the solution to $\eqref{eq:PbAux}$ with $\Omega=\overline B_R(0)$ given by Lemma~\ref{lem:exist_gam_fix}. Indeed, given any $\varepsilon\in (0,\beta)$, we can take $\lambda$ large so that $w_\lambda'(R) < -\gamma (\beta-\varepsilon)$. This ensures that $w_\lambda'(R)+\gamma w_\lambda(R)<0$, and thus $w_\lambda$ is a strict subsolution of~\eqref{eq:PbAux}. By
Proposition~\ref{prop:comparison}, it then follows that $w_\lambda<\ou_\lambda$ in $\overline B_R(0)$ for sufficiently large $\lambda$.
\end{remark}

\begin{proof}
Let $\varepsilon\in (0,\beta)$. We define $g(s):=\tilde f (s+\beta-\varepsilon)$ for any $s\in\R$. Observe that if $v_\lambda$ is a solution of the Dirichlet problem
\begin{equation}
\label{eq:Pf_Ball_1}
\begin{cases}
-\Delta v = \lambda g(v) & \mbox{in} \; B_R(0),\\
v = 0 & \mbox{on} \; \partial B_R(0),\\
v(x)\in (0, \varepsilon) & \mbox{for any} \; x\in B_R(0),
\end{cases} 
\end{equation}
then $w_\lambda := v_\lambda +\beta-\varepsilon$ is a solution of~\eqref{eq:PbAuxDirBall} with the same derivative on the boundary. Therefore, for our purposes, it suffices to study problem~\eqref{eq:Pf_Ball_1}.

The existence of a solution of~\eqref{eq:Pf_Ball_1} for large $\lambda$ is standard. Indeed, as $g(s)\geq 0$ for any $s\in \R$ and $g(\varepsilon)=0$, one can use, for instance,~\cite[Theorem~3]{B-GM-I}, to obtain the existence of some $\overline\lambda>0$ for which a maximal solution $v_\lambda$ to~\eqref{eq:Pf_Ball_1} exists for any $\lambda>\overline\lambda$. Moreover,~\cite[Theorem~3]{B-GM-I} also guarantees that $v_\lambda$ is an increasing sequence in $B_R(\Omega)$ that converges uniformly in compact subsets of $B_R(\Omega)$ to $\varepsilon$. Using the results of~\cite{G-Ni-N}, we can also ensure that $v_\lambda$ is radially symmetric and radially decreasing, so we can write $v_\lambda(r)$, where $r=|x|$.

It remains for us to show that $v'_\lambda(R)\to -\infty$ as $\lambda\to \infty$. The Pohozaev identity for radial solutions gives us the relation
\begin{equation}
\label{eq:Pf_Ball_3}
\left((v_\lambda'(R)\right)^2 = 2 R^{-N} \lambda \int_0^R r^{N-1} \left( NG(v_\lambda(r)) - \frac{N-2}{2} v_\lambda(r) g(v_\lambda(r)) \right) \ \mathrm{d}r,
\end{equation}
where $G(s) = \int_0^s g(t)\ \mathrm{d}t$. As $v_\lambda'(R)<0$ thanks to Hopf's Lemma, to our aim it suffices to show the existence of some $C>0$ such that
\begin{equation}
    \label{eq:Pf_Ball_2}
    \int_0^R r^{N-1} \left( NG(v_\lambda(r)) - \frac{N-2}{2} v_\lambda(r) g(v_\lambda(r)) \right) \ \mathrm{d}r > C
\end{equation}
for every $\lambda$ greater than some value. 

To simplify computations, we define the function $h(s) := NG(s) - \frac{N-2}{2} s g(s)$ for all $s\in [0,\varepsilon]$. Observe that $h(\varepsilon)= NG(\varepsilon)>0$ and, as $h$ is continuous, we can find some $\delta\in (0,\varepsilon)$ such that $h(s)>C_1>0$ for any $s\in [\varepsilon-\delta,\varepsilon]$. Furthermore, $h$, that can be negative, is bounded, so there is some $C_2>0$ such that $h(s)>-C_2$ for all $s\in [0,\varepsilon]$.

On the other hand, since $v_\lambda$ converges uniformly to $\varepsilon$  in compact subsets of $B_R(0)$, for each $n\in\N$ we can find $\lambda_n>0$ large such that $v_\lambda(r)\in [\varepsilon-\delta,\varepsilon]$ for every $0\leq r\leq R-\frac{R}{n}$ and for every $\lambda>\lambda_n$. Then, for any $\lambda>\lambda_n$ we obtain that
\begin{equation}
\label{eq:Pf_Ball_4}
\begin{split}
\int_0^R r^{N-1} h(v_\lambda(r)) \ \mathrm{d}r
& = \int_0^{R-\frac{R}{n}} r^{N-1} h(v_\lambda(r)) \ \mathrm{d}r + \int_{R-\frac{R}{n}}^R r^{N-1} h(v_\lambda(r)) \ \mathrm{d}r \\
&\geq C_1 \int_0^{R-\frac{R}{n}} r^{N-1} \ \mathrm{d}r - C_2 \int_{R-\frac{R}{n}}^R r^{N-1} \ \mathrm{d}r\\
&= \frac{1}{N} C_1 \left(R-\frac{R}{n}\right)^N - \frac{1}{N} C_2 \left(R^N - \left(R-\frac{R}{n}\right)^N\right).
\end{split}
\end{equation}
Since this last quantity converges to $\frac{1}{N}C_1R^N>0$ when $n\to\infty$, one can take $n_0$ large to obtain from~\eqref{eq:Pf_Ball_4} that~\eqref{eq:Pf_Ball_2} holds for any $\lambda>\lambda_{n_0}$. Therefore, from~\eqref{eq:Pf_Ball_3} we deduce that $v_\lambda'(R) \leq -\sqrt{2R^{-N}C} \sqrt{\lambda}$ for any $\lambda>\lambda_{n_0}$ and, consequently, we can conclude that $v_\lambda'(R)\to -\infty$ as $\lambda\to\infty$.
\end{proof}

Now, we return to consider a general bounded domain $\Omega$ with $C^2$ boundary. The next result provides the convergence in $C(\overline\Omega)$ of $\ou_\lambda$ to $\beta$. The proof, which uses an original approach, is based on geometrical arguments.

\begin{proposition}
    \label{lem:conv_gam_fix}
    Let $\gamma>0$ be fixed. Then, the sequence of solutions $\ou_\lambda$ to~\eqref{eq:PbAux} given by Lemma~\ref{lem:exist_gam_fix} converges uniformly in $\overline\Omega$ to $\beta$.
\end{proposition}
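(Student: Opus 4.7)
By Lemma~\ref{lem:exist_gam_fix}, $\ou_\lambda(x)$ is monotone nondecreasing in $\lambda$ and bounded by $\beta$, so the pointwise limit $u^\star(x):=\lim_{\lambda\to\infty}\ou_\lambda(x)$ exists with $u^\star\leq\beta$. Once I prove $u^\star\equiv\beta$ pointwise on $\overline\Omega$, Dini's theorem yields uniform convergence, since each $\ou_\lambda$ is continuous and $\overline\Omega$ is compact. The plan is thus to prove pointwise convergence to $\beta$ first in the interior and then up to the boundary.

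For interior convergence, I fix any ball $B_R(y_0)\subset\Omega$ and consider the Dirichlet problem $-\Delta v=\lambda\tilde f(v)$ in $B_R(y_0)$, $v=0$ on $\partial B_R(y_0)$. Since $\tilde f\ge 0$ and $\tilde f(\beta)=0$, the Hess-type argument already invoked in the proof of Lemma~\ref{lem:ball} provides, for $\lambda$ large, a maximal solution $v_\lambda\in[0,\beta]$ with $v_\lambda\to\beta$ uniformly on compact subsets of $B_R(y_0)$. A direct integration by parts shows that the extension of $v_\lambda$ by $0$ outside $B_R(y_0)$ is a weak subsolution of~\eqref{eq:PbAux}: the surface term across $\partial B_R(y_0)$ has the correct sign because $\partial_\nu v_\lambda\le 0$ (Hopf) and the test function is nonnegative; $\tilde f(0)=0$ handles the equation outside; and the extension vanishes on $\partial\Omega$, so the Robin condition there is trivial. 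By weak comparison (cf. Proposition~\ref{prop:comparison}), $\ou_\lambda\ge v_\lambda$ on $\overline\Omega$. The uniform interior-ball radius furnished by the $C^2$ regularity of $\partial\Omega$ then yields, via a finite cover, $\ou_\lambda\to\beta$ uniformly on every compact $K\Subset\Omega$.

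For boundary convergence, fix $\varepsilon>0$ and $\eta>0$ small, and introduce the tubular neighbourhood $T_\eta:=\{x\in\Omega:\mathrm{dist}(x,\partial\Omega)<\eta\}$, which for $\eta$ small enough is a $C^2$ shell foliated by the normal exponential map thanks to the $C^2$ regularity of $\partial\Omega$. The inner boundary $\Gamma_\eta:=\{x\in\Omega:\mathrm{dist}(x,\partial\Omega)=\eta\}$ is a compact subset of $\Omega$, so the interior convergence gives $\ou_\lambda\ge\beta-\varepsilon$ on $\Gamma_\eta$ for $\lambda\ge\lambda_0(\varepsilon,\eta)$. Consider then the unique solution $\phi_\eta\in C^2(\overline{T_\eta})$ of the mixed linear problem
\[
-\Delta\phi_\eta=0 \text{ in } T_\eta,\qquad \partial_\nu\phi_\eta+\gamma\phi_\eta=0 \text{ on }\partial\Omega,\qquad \phi_\eta=\beta-\varepsilon \text{ on }\Gamma_\eta.
\]
An asymptotic analysis of $\phi_\eta$ in normal coordinates (one-dimensional at leading order with a tangential correction of order $\eta$ coming from the second fundamental form of $\partial\Omega$) gives $\phi_\eta\ge\beta-\varepsilon-C\eta$ on $\overline{T_\eta}$, uniformly in $x_0\in\partial\Omega$. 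Since $-\Delta(\ou_\lambda-\phi_\eta)=\lambda\tilde f(\ou_\lambda)\ge 0$ in $T_\eta$ and $\ou_\lambda\ge\phi_\eta$ on $\Gamma_\eta$ while both satisfy the same Robin condition on $\partial\Omega$, the mixed comparison principle implies $\ou_\lambda\ge\phi_\eta$ in $T_\eta$, hence $\ou_\lambda\ge\beta-\varepsilon-C\eta$ on $\partial\Omega$. Letting $\lambda\to\infty$, then $\eta\to 0$, then $\varepsilon\to 0$, and combining with the interior convergence, $\ou_\lambda\to\beta$ uniformly on $\overline\Omega$.

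The main obstacle is the boundary step, specifically the uniform-in-$x_0\in\partial\Omega$ asymptotic analysis of $\phi_\eta$ on the shrinking tubular neighbourhood $T_\eta$: the tangential part of the Laplacian must be controlled against the dominant normal part as $\eta\to 0$. The $C^2$ regularity of $\partial\Omega$ is precisely what allows this uniform asymptotic to be carried out and is the geometric input that closes the argument; intuitively, it lets one treat the boundary layer as approximately one-dimensional and reduce $\phi_\eta$ to the explicit linear profile solving $-\phi''=0$ on $(0,\eta)$ with $\phi'(0)=\gamma\phi(0)$ and $\phi(\eta)=\beta-\varepsilon$.
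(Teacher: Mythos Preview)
Your approach is correct and genuinely different from the paper's, particularly at the boundary. Both proofs begin by reducing to pointwise convergence via Dini and handle the interior by comparison with Dirichlet solutions. For the boundary, however, the paper does not use a tubular shell at all: it invokes Lemma~\ref{lem:ball} (the Pohozaev computation showing $w_\lambda'(R)\to-\infty$ on a ball tangent to $\partial\Omega$ at $y_0$), then extends $w_\lambda$ outward by an explicit radial harmonic function so as to build, for each $y_0\in\partial\Omega$ and each $\varepsilon>0$, a global weak subsolution of~\eqref{eq:PbAux} taking the value $\beta-\varepsilon$ at $y_0$. Your route instead bootstraps the interior estimate through the linear mixed problem on $T_\eta$ and never needs the derivative blow-up from Lemma~\ref{lem:ball}; in that sense it is more elementary. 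What you lose is explicitness: the step you flag as the ``main obstacle'' (the uniform lower bound $\phi_\eta\ge\beta-\varepsilon-C\eta$) is left as an asymptotic sketch. It can be made fully rigorous without any normal-coordinate expansion by a one-line barrier: with $d(x)=\mathrm{dist}(x,\partial\Omega)$ and $K\ge\max\{\gamma,\,1+\sup_{T_\eta}|\Delta d|\}$, the function $\psi(x)=(\beta-\varepsilon)e^{-K(\eta-d(x))}$ is subharmonic in $T_\eta$, satisfies $\partial_\nu\psi+\gamma\psi\le 0$ on $\partial\Omega$, and equals $\beta-\varepsilon$ on $\Gamma_\eta$, so the mixed comparison principle gives $\phi_\eta\ge\psi\ge(\beta-\varepsilon)(1-K\eta)$. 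Replacing your asymptotic paragraph with this barrier closes the argument cleanly.
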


\begin{proof}
When $N=1$, this result follows directly from Lemma~\ref{lem:ball}. Therefore, in the following, we assume $N\geq 2$. We also point out that $\ou_\lambda$ is an increasing sequence of continuous functions in a compact domain, so due to Dini's Theorem it suffices to prove that $\ou_\lambda\to \beta$ pointwise in $\overline\Omega$.

In $\Omega$, the pointwise convergence of $\ou_\lambda$ to $\beta$ is a consequence of the well-known behaviour of the solutions to the Dirichlet problem
\begin{equation}
\label{eq:PbAuxDir}
\begin{cases}
-\Delta w = \lambda\tilde f(w) & \mbox{in} \; \Omega,\\
w = 0 & \mbox{on} \; \partial \Omega.
\end{cases} 
\end{equation}

To be concrete, in~\cite[Theorem~3]{B-GM-I}, it is proved that a positive solution $w_\lambda$ to~\eqref{eq:PbAuxDir} with maximum in $(\alpha,\beta)$ exists for large $\lambda$ and that $w_\lambda\to \beta$ uniformly in compact subsets of $\Omega$. As $w_\lambda$ is a strict subsolution of $\eqref{eq:PbAux}$, then $w_\lambda < \ou_\lambda$ thanks to Proposition~\ref{prop:comparison} and thus $\overline u_\lambda$ also converges to $\beta$ uniformly in compact subsets of $\Omega$. In particular, $\overline u_\lambda(x)$ converges to $\beta$ for any $x\in \Omega$.

It remains for us to show that $\overline u_\lambda\to \beta$ pointwise in $\partial\Omega$. This will be done by constructing, for each small $\varepsilon>0$ and each point $y_0$ of the boundary, a subsolution of~\eqref{eq:PbAux} for sufficiently large $\lambda$ that takes the value $\beta-\varepsilon$ in $y_0$. Here, we will make use of Lemma~\ref{lem:ball}.

With this aim, we fix $y_0\in\partial\Omega$ and $\varepsilon>0$ small. As $\partial\Omega$ satisfies the interior ball condition, there is some $x_0\in \Omega$ and some $R_1>0$ such that $B_{R_1}(x_0)\subset \Omega$ and $\partial B_{R_1}(x_0) \cap \partial\Omega = \{y_0\}$. For any $x\in \R^N$, we define $r(x) = |x-x_0|$. Observe that, for any $y\in\partial\Omega$, we have that
\[
\frac{\partial r}{\partial \nu}(y) = \nabla r(y) \cdot \nu(y) = \frac{y-x_0}{|y-x_0|} \cdot \nu(y),
\]
where, we recall, $\nu(y)$ denotes the outward normal unit vector to $\partial\Omega$ at $y$. As $B_{R_1}(x_0)$ is tangent to $\partial\Omega$ at $y_0$, the outward normal vectors to both surfaces at $y_0$ must be equal. Then, we obtain
\[
\frac{\partial r}{\partial \nu}(y_0) = 1.
\]
As $\nu(y)$ is a continuous function (due to the smoothness of $\partial\Omega$), we can find $R_2>R_1$ such that
\begin{equation}
\label{eq:Pf_conv_gfix_1}
    \frac{\partial r}{\partial \nu}(y) \geq \frac{1}{2},\quad \forall y\in \Gamma_{1} := B_{R_2}(x_0) \cap \partial\Omega.
\end{equation}

Now, we construct the subsolution in the following way. Using Lemma~\ref{lem:ball} with $B_R(0)$ replaced by $B_{R_1}(x_0)$, we can find some $\overline\lambda_1(\varepsilon,y_0)>0$ such that a radial solution $w_\lambda(r)$ to~\eqref{eq:PbAuxDirBall} defined in $B_{R_1}(x_0)$ exists for every $\lambda>\overline\lambda_1$ (recall $r(x) = |x-x_0|$). Furthermore, it holds that $\lim_{\lambda\to\infty} w_\lambda'(R_1)=-\infty$. When $N=2$, we extend this function to the whole space $\R^N$ as
\[
\tilde w_\lambda (r) = \begin{cases}
    w_\lambda(r) & \text{ if } 0\leq r < R_1,\\
    \beta-\varepsilon + \big(\log r - \log R_1\big) R_1 w_\lambda'(R_1) & \text{ if } R_1\leq r < R_2,\\
    \beta-\varepsilon + \big(\log R_2 - \log R_1\big) R_1 w_\lambda'(R_1) & \text{ if } r\geq R_2,
\end{cases}
\]
and, when $N\geq 3$, as
\[
\tilde w_\lambda (r) = \begin{cases}
    w_\lambda(r) & \text{ if } 0\leq r < R_1,\\
    \beta-\varepsilon + \left(r^{2-N}-R_1^{2-N}\right) R_1^{N-1} (2-N)^{-1} w_\lambda'(R_1) & \text{ if } R_1\leq r < R_2,\\
    \beta-\varepsilon + \left(R_2^{2-N}-R_1^{2-N}\right) R_1^{N-1} (2-N)^{-1} w_\lambda'(R_1) & \text{ if } r\geq R_2.
\end{cases}
\]
Since $\lim_{\lambda\to\infty} w_\lambda'(R_1)=-\infty$, we can take $\overline \lambda_2>\overline\lambda_1$ such that $\tilde w_\lambda(R_2)<0$ for any $\lambda>\overline \lambda_2$.

We observe that $\tilde w_\lambda(r)$ is continuous, radially decreasing for $0\leq r\leq R_2$, and identically equal to a negative constant for $r>R_2$, provided that $\lambda>\overline \lambda_2$. Moreover, $\tilde w_\lambda(r)$ is harmonic when $R_1<r<R_2$. We claim that $\tilde w_\lambda$ is a weak subsolution of~\eqref{eq:PbAux} for sufficiently large $\lambda$. Then, we have to show that $\tilde w_\lambda\in H^1(\Omega)$ verifies
\begin{equation*}
    \into \nabla \tilde w_\lambda \nabla \varphi + \gamma \intdo \tilde w_\lambda \varphi \leq \lambda\into \tilde f(\tilde w_\lambda)\varphi,\ \forall \varphi\in \hu \text{ with } \varphi\geq 0
\end{equation*}
when $\lambda$ is large. From now on, we fix $\varphi\in \hu \text{ with } \varphi\geq 0$. To ease the notation, we set $\Gamma_2 = (\Omega\setminus B_{R_2}(x_0))\cap \partial \Omega$ and $\Gamma_3 = \partial B_{R_2}(x_0) \cap \Omega$ (see Figure~\ref{fig:dominio}).

\begin{figure}[ht]
\centering
  \includegraphics[width=6cm]{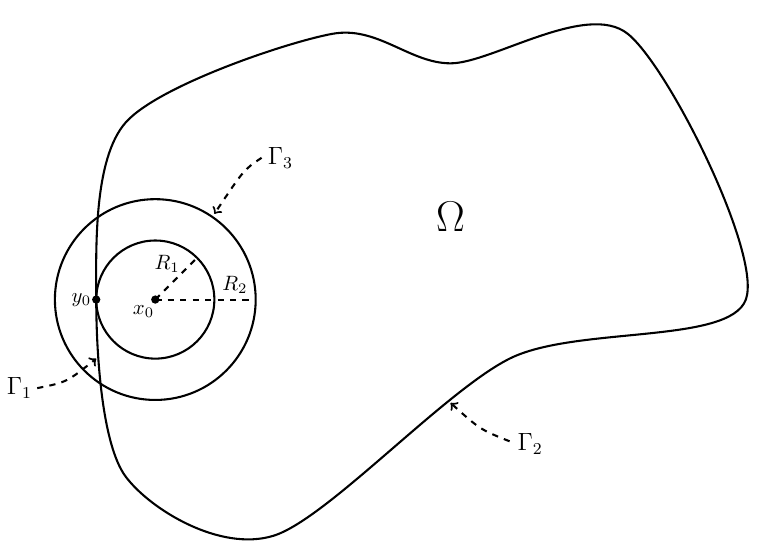}
\caption{\label{fig:dominio} Sketch of the different sets that appear in the proof of Proposition~\ref{lem:conv_gam_fix}.}
\end{figure}

Of course, $\tilde w_\lambda\in H^1(\Omega)$.  On the one hand, as $\tilde w_\lambda \in H^2 \left(B_{R_2}(x_0)\cap \Omega\right)$ because $\lim_{r\to R_1^-} \tilde w_\lambda'(r)$ is equal to $\lim_{r\to R_1^+} \tilde w_\lambda'(r)$, and as $-\Delta \tilde w_\lambda \leq \lambda \tilde f(\tilde w_\lambda)$ $\textrm{a.e.}$ in $B_{R_2}(x_0)\cap \Omega$, we can integrate by parts to obtain 
\begin{equation}
    \label{eq:Pf_conv_gfix_3}
    \int_{B_{R_2}(x_0)\cap \Omega} \nabla \tilde w_\lambda \nabla \varphi - \int_{\Gamma_1} \frac{\partial \tilde w_\lambda}{\partial \nu} \varphi - \int_{\Gamma_3} \tilde w_\lambda'(R_2^-) \varphi \leq \lambda\int_{B_{R_2}(x_0)\cap \Omega} \tilde f(\tilde w_\lambda)\varphi,
\end{equation}
where $\tilde w_\lambda'(R_2^-):=\lim_{r\to R_2^-} \tilde w_\lambda'(r)$. On the other hand, we obviously have $\tilde w_\lambda \in H^2 \left(\Omega\setminus B_{R_2}(x_0)\right)$ and $-\Delta \tilde w_\lambda = 0 \leq \lambda \tilde f(\tilde w_\lambda)$ in $\Omega\setminus B_{R_2}(x_0)$, so integration by parts yields
\begin{equation}
    \label{eq:Pf_conv_gfix_4}
    \int_{\Omega\setminus B_{R_2}(x_0)} \nabla \tilde w_\lambda \nabla \varphi - \int_{\Gamma_2} \frac{\partial \tilde w_\lambda}{\partial \nu} \varphi + \int_{\Gamma_3} \tilde w_\lambda'(R_2^+) \varphi \leq \lambda\int_{\Omega\setminus B_{R_2}(x_0)} \tilde f(\tilde w_\lambda)\varphi,
\end{equation}
where, as before, $\tilde w_\lambda'(R_2^+):=\lim_{r\to R_2^+} \tilde w_\lambda'(r)$. Taking into account that $\tilde w_\lambda'(R_2^-) = (R_1/R_2)^{N-1} w_\lambda'(R_1)<0 =\tilde w_\lambda'(R_2^+)$, we can sum both~\eqref{eq:Pf_conv_gfix_3} and~\eqref{eq:Pf_conv_gfix_4} to obtain, after dropping the nonnegative term related to $\Gamma_3$, that
\begin{equation}
    \label{eq:Pf_conv_gfix_5}
    \into \nabla \tilde w_\lambda \nabla \varphi - \int_{\Gamma_1} \frac{\partial \tilde w_\lambda}{\partial \nu} \varphi - \int_{\Gamma_2} \frac{\partial \tilde w_\lambda}{\partial \nu} \varphi \leq \lambda\into \tilde f(\tilde w_\lambda)\varphi.
\end{equation}

Now, we focus our attention on the boundary terms. When $y\in\Gamma_1$, we have that $r(y)\in[R_1,R_2)$. Here, we stress that $\tilde w_\lambda(y)\leq \beta-\varepsilon$ and that $w_\lambda'(r(y)) \leq (R_1/R_2)^{N-1} w_\lambda'(R_1)<0$. Using~\eqref{eq:Pf_conv_gfix_1}, we obtain
\begin{equation*}
    \begin{split}
    \frac{\partial \tilde w_\lambda}{\partial\nu}(y) + \gamma \tilde w_\lambda(y) &\leq \frac{\partial \tilde w_\lambda}{\partial\nu}(y) + \gamma (\beta-\varepsilon) = \tilde w_\lambda'(r(y)) \frac{\partial r}{\partial \nu}(y) + \gamma (\beta-\varepsilon)\\
    &\leq \frac{1}{2} \tilde w_\lambda'(r(y)) + \gamma (\beta-\varepsilon) \leq \frac{1}{2} (R_1/R_2)^{N-1} w_\lambda'(R_1) + \gamma (\beta-\varepsilon).
    \end{split}
\end{equation*}
Since $\lim_{\lambda\to\infty} w_\lambda'(R_1) = -\infty$, we can find $\overline\lambda_3>\overline\lambda_2$ such that, for any $\lambda>\overline\lambda_3$, it holds
\begin{equation}
    \label{eq:Pf_conv_gfix_6}
    \frac{\partial \tilde w_\lambda}{\partial\nu}(y) + \gamma \tilde w_\lambda(y) < 0 \text{ for all } y\in \Gamma_1.
\end{equation}
Besides, when $y\in \Gamma_2$ we have that $r(y)> R_2$, so $\tilde w_\lambda$ is negative on this set and its normal derivative is zero. Therefore, we obtain
\begin{equation}
    \label{eq:Pf_conv_gfix_7}
    \frac{\partial \tilde w_\lambda}{\partial\nu}(y) + \gamma \tilde w_\lambda(y) = \gamma \tilde w_\lambda(y) <0 \text{ for all } y\in \Gamma_2.
\end{equation}

Joining~\eqref{eq:Pf_conv_gfix_5},~\eqref{eq:Pf_conv_gfix_6} and~\eqref{eq:Pf_conv_gfix_7}, we deduce that $\tilde w_\lambda$ is a weak subsolution of~\eqref{eq:PbAux} provided $\lambda>\overline\lambda_2$. As $\overline u_\lambda$ is the maximal solution of~\eqref{eq:PbAux} in the interval $[0,\beta]$, we have that $\tilde w_\lambda(x) \leq \overline u_\lambda(x)$ for all $x\in \overline{\Omega}$ and all $\lambda>\overline\lambda_3(\varepsilon,y_0)$. In particular, $\tilde w_\lambda(y_0) = \beta-\varepsilon \leq \overline u_\lambda(y_0)$. Since $\varepsilon$ is arbitrary, we can affirm that $\overline u_\lambda(y_0)\to \beta$ as $\lambda\to\infty$ and this concludes the proof.
\end{proof}

In the following, we aim to study problem~\eqref{eq:PbAux} for a fixed $\lambda>0$ to determine whether a solution to~\eqref{eq:PbAux} exists when $\gamma$ is small and, if so, to study the limit of such solutions as $\gamma\to 0$. Since $\lambda$ does not play a significant role here, we consider instead problem
\begin{equation}
\label{eq:PbAux2}
\begin{cases}
-\Delta u = \tilde f(u) & \mbox{in} \; \Omega,\\
\frac{\partial u}{\partial \nu} + \gamma u = 0 & \mbox{on} \; \partial \Omega.
\end{cases} 
\end{equation}

\begin{proposition}
\label{prop:exist}
There is some constant $\overline\gamma>0$ such that, for any $0<\gamma<\overline\gamma$, problem~\eqref{eq:PbAux2} has a positive maximal solution in the interval $[0,\beta]$. Moreover, $\overline u_\gamma \to \beta$ uniformly in $\overline\Omega$ when $\gamma\to 0$.
\end{proposition}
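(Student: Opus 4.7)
My plan is to mirror the strategy of Lemma~\ref{lem:exist_gam_fix} for the existence part and to build an explicit family of subsolutions lying close to $\beta$ for the convergence.

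For existence, I would exploit the energy functional
\[
\tilde J_\gamma(u)=\frac{1}{2}\into|\nabla u|^2+\frac{\gamma}{2}\intdo u^2-\into \tilde F(u),\quad u\in \hu,
\]
with $\tilde F(s)=\int_0^s\tilde f(t)\,\mathrm{d}t$, which is coercive and weakly lower semicontinuous on $\hu$, hence attains a global minimum. Since $\tilde F(\beta)>0$ and $\tilde J_\gamma(\beta)=\frac{\gamma\beta^2|\partial\Omega|}{2}-|\Omega|\tilde F(\beta)$ is strictly negative for every $\gamma$ below an explicit threshold, the minimizer is nontrivial in that range. Truncating it into $[0,\beta]$ does not increase the energy, so we may assume the minimizer lies in $[0,\beta]$; Remark~\ref{rem:weak} turns it into a classical solution, and Corollary~\ref{cor:comparison} forces its $\co$-norm into $(\alpha,\beta)$. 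Using that the constant $\beta$ is a supersolution of~\eqref{eq:PbAux2}, the monotone iteration associated with the operator $K$ of Section~\ref{sec:prelim}, initialized at $\beta$, produces the maximal solution $\overline{u}_\gamma\in[0,\beta]$, exactly as in Lemma~\ref{lem:exist_gam_fix}.

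For the convergence $\overline{u}_\gamma\to\beta$ in $\co$, the idea is to produce, for each $\varepsilon\in(0,\beta-\alpha)$, a strict subsolution of~\eqref{eq:PbAux2} staying uniformly close to $\beta-\varepsilon$ when $\gamma$ is small. The bare constant $\beta-\varepsilon$ satisfies the interior inequality but fails the Robin one, since its Robin combination equals $\gamma(\beta-\varepsilon)>0$, so I would add an $O(\gamma)$ correction bending it near $\partial\Omega$. Picking $\psi\in C^2(\overline\Omega)$, $\psi\geq 0$, solving the auxiliary Neumann problem
\[
-\Delta\psi=\frac{|\partial\Omega|}{|\Omega|}\ \text{in }\Omega,\qquad\frac{\partial\psi}{\partial\nu}=-1\ \text{on }\partial\Omega
\]
(whose solvability follows from the standard compatibility condition), I would set $w_\gamma:=(\beta-\varepsilon)+2\gamma(\beta-\varepsilon)\psi$. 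A direct computation gives $\partial_\nu w_\gamma+\gamma w_\gamma=-\gamma(\beta-\varepsilon)+2\gamma^2(\beta-\varepsilon)\psi<0$ on $\partial\Omega$ for $\gamma$ small, whereas in $\Omega$ one has $-\Delta w_\gamma=2\gamma(\beta-\varepsilon)|\partial\Omega|/|\Omega|$ of order $\gamma$, which is dominated by $\tilde f(w_\gamma)=f(w_\gamma)$, as this quantity remains above a positive constant because $w_\gamma$ stays in a compact subinterval of $(\alpha,\beta)$. Hence $w_\gamma$ is a strict subsolution of~\eqref{eq:PbAux2} for every sufficiently small $\gamma$.

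Combining the two ingredients, the monotonicity of $K$ together with $K(w_\gamma)\geq w_\gamma$ propagates through the iteration from $\beta$ and yields $\overline{u}_\gamma\geq w_\gamma$ pointwise in $\overline\Omega$, so $\overline{u}_\gamma\geq (\beta-\varepsilon)+o_\gamma(1)$ uniformly. Together with $\overline{u}_\gamma\leq\beta$ and the arbitrariness of $\varepsilon$, this gives $\overline{u}_\gamma\to\beta$ uniformly. I expect the main obstacle to be the design of $\psi$: enforcing $\partial_\nu\psi<0$ to achieve the negative Robin combination compels, via the compatibility relation, a positive contribution to $-\Delta\psi$ in the interior that could in principle jeopardize the interior subsolution inequality; fortunately both contributions enter at order $\gamma$ and are absorbed by the fixed positive value of $f(\beta-\varepsilon)$ in the limit $\gamma\to 0$.
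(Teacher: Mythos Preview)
Your existence argument is essentially the paper's: the same energy functional, the same computation $\tilde J_\gamma(\beta)<0$ for small $\gamma$, and the same iteration from $\beta$ to produce the maximal solution. The truncation remark you add is harmless, since $\tilde f$ vanishes outside $[\alpha,\beta]$.

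For the convergence $\overline u_\gamma\to\beta$, however, you take a genuinely different route. The paper argues indirectly: it first shows that $\overline u_\gamma$ is monotone decreasing in $\gamma$, then invokes the stability Lemma~\ref{lem:stability} to pass to the limit and obtain a solution $u$ of the Neumann problem~\eqref{eq:Pf_Lem_exist_2}; the compatibility condition $\int_\Omega\tilde f(u)=0$ combined with $\tilde f\geq 0$ and $\|u\|_\co\in(\alpha,\beta]$ then forces $u\equiv\beta$. Your approach is instead constructive: you build an explicit one-parameter family of strict subsolutions $w_\gamma=(\beta-\varepsilon)+2\gamma(\beta-\varepsilon)\psi$ converging uniformly to $\beta-\varepsilon$, and compare directly via the monotone iteration. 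The computation is correct (the Neumann solvability of $\psi$ holds by the compatibility you note, and one may add a constant to make $\psi\geq 0$; both the interior and boundary inequalities are $O(\gamma)$ perturbations of the trivial ones at $\beta-\varepsilon$). Your argument is more elementary in that it bypasses Lemma~\ref{lem:stability} and any compactness, and it is in principle quantitative. The paper's argument, on the other hand, yields as a by-product the monotonicity of $\overline u_\gamma$ in $\gamma$, which is used later in the proof of Theorem~\ref{th:limit_gamma}, and it illustrates the structural role of the Neumann compatibility condition.
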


\begin{proof}
The existence of some $\overline\gamma>0$ for which a nontrivial solution to~\eqref{eq:PbAux2} exists for any $\gamma<\overline\gamma$ can be proved using variational methods in the same way as in Lemma~\ref{lem:exist_gam_fix}. Due to Proposition~\ref{prop:comparison} and Corollary~\ref{cor:comparison}, this solution is positive and has its maximum in $(\alpha,\beta)$. As $\beta$ is always a supersolution of~\eqref{eq:PbAux2}, for any $\gamma<\overline\gamma$ the maximal solution $\ou_\gamma$ to~\eqref{eq:PbAux2} in the interval $[0,\beta]$ exists.

Now, observe that $\ou_\gamma(x)$ is a decreasing function in $\gamma$ for any $x\in \overline\Omega$. Indeed, let $0<\gamma_1<\gamma_2<\overline\gamma$. Then $\ou_{\gamma_2}$ is a strict subsolution to~\eqref{eq:PbGamma} with $\gamma=\gamma_1$. Taking into account that $\ou_{\gamma_1}$ is the maximal solution of~\eqref{eq:PbGamma} with $\gamma=\gamma_1$ and using Proposition~\ref{prop:comparison}, we easily deduce that $\ou_{\gamma_2}< \ou_{\gamma_1}$ in $\overline\Omega$.

Therefore, we can define $u(x)=\lim_{\gamma\to 0} \ou_\gamma (x)$. As $\ou_\gamma$ is bounded in $\co$ by $\beta$, Lemma~\ref{lem:stability} implies that $\ou_\gamma\to u$ strongly in $\co$ and that $u\in C^2(\overline\Omega)$ is a solution of the Neumann problem
\begin{equation}
\label{eq:Pf_Lem_exist_2}
\begin{cases}
\displaystyle -\Delta u  = \tilde f(u) & \mbox{in} \; \Omega,\\
\frac{\partial u}{\partial \nu} = 0 & \mbox{on} \; \partial \Omega.
\end{cases} 
\end{equation}

Since $\|\ou_\gamma\|_{\co}\in (\alpha,\beta)$ for every $0<\gamma < \overline\gamma$, then $\|u\|_{\co} \in [\alpha,\beta]$, but notice that $\|u\|_{C(\overline\Omega)}\neq \alpha$ due to the monotonicity of $\ou_\gamma$. We stress that any solution to this Neumann problem must satisfy the compatibility condition
\[
\into \tilde f(u)=0.
\]
This condition is obtained by just taking the constant function $1$ as test function in the weak formulation of~\eqref{eq:Pf_Lem_exist_2}. As $\tilde f\geq 0$ and $u$ is continuous, then $\{u(x):x\in\Omega\}$ is a connected set that must be contained in some connected component of $\{s\geq 0: f(s)=0\}$. As $\|u\|_{C(\overline\Omega)}\in (\alpha,\beta]$ and $f(s)>0$ when $s\in (\alpha,\beta)$, then $\|u\|_{C(\overline\Omega)} = \beta$ and we can conclude that $u\equiv \beta$ in $\overline\Omega$.
\end{proof}

\section{Proofs of the main results}
\label{sec:main}

In this section, we prove the results stated in the introduction. To show Theorem~\ref{th:exist_gam_fix}, the strategy is to consider the truncated problem~\eqref{eq:PbAux} and use the uniform convergence of its solutions to $\beta$. Then, for sufficiently large $\lambda$, these solutions also solve~\eqref{eq:PbLambda}.

\begin{proof}[Proof of Theorem \ref{th:exist_gam_fix}]
Consider $\gamma>0$ fixed. Let $\overline v_{\lambda,\gamma}$ be the solution to~\eqref{eq:PbAux} given by Lemma~\ref{lem:exist_gam_fix} for $\lambda$ greater than a constant. By Proposition~\ref{lem:conv_gam_fix}, we know that $\overline v_{\lambda,\gamma}\to \beta$ in $C(\overline\Omega)$ as $\lambda\to\infty$, so there is some $\overline \lambda>0$ such that, for any $\lambda>\overline\lambda$, it holds $\overline v_{\lambda,\gamma}(x)\in (\alpha,\beta)$ for all $x\in \overline\Omega$. For later purposes, we take $\overline \lambda$ minimal, i.e., we take
\[
\overline\lambda(\gamma) =\inf \left\{\lambda>0 : \overline v_{\lambda,\gamma}(x)\in [\alpha,\beta) \text{ for all } x\in\overline\Omega \right\}.
\]
We point out that, since $\overline v_{\lambda,\gamma}$ is increasing and less than $\beta$, the set over which we take the infimum is an interval. In fact, this set is closed due to Lemma~\ref{lem:stability}.

As $\tilde f(s)=f(s)$ when $s\in(\alpha,\beta)$, we obtain that $\overline v_{\lambda,\gamma}$ is also a solution of~\eqref{eq:PbLambda} for any $\lambda \geq \overline\lambda$. We stress that $\overline v_{\lambda,\gamma}$ is also the maximal solution of~\eqref{eq:PbLambda} in $[0,\beta]$ when $\lambda\geq \overline\lambda$. To avoid mixing concepts, we denote by $\overline u_{\lambda,\gamma}$ the maximal solutions of~\eqref{eq:PbLambda} in $[0,\beta]$ whenever they exist. Of course, we have that $\ou_{\lambda,\gamma} \equiv \overline v_{\lambda,\gamma}$ for all $\lambda \geq \overline\lambda$.

Next, we want to show the multiplicity of nonnegative solutions for $\lambda>\overline\lambda$. Then, following the same notation as in the statement of Theorem~\ref{th:exist_gam_fix}, we can simply take $\lambda_\mathrm{mult}(\gamma)$ as $\overline \lambda (\gamma)$. Here, the key is that, since $\ou_{\overline\lambda, \gamma}(x)\in [\alpha,\beta)$ for any $x\in \overline\Omega$ and $f(s)>0$ when $s\in (\alpha, \beta)$, we can ensure that $\ou_{\overline\lambda, \gamma}$ is a strict subsolution of~\eqref{eq:PbLambda} for any $\lambda>\overline{\lambda}$.

From now on, we fix $\lambda > \overline\lambda$ and we consider the operator $K\colon \co \to C^2(\overline\Omega)$ defined in~\eqref{eq:def_K}. We recall that any fixed point of $K$ is a solution of~\eqref{eq:PbLambda} and vice versa. On the one hand, Lemma~\ref{lem:LS_1} ensures that $\operatorname{deg}(I-K, \mathcal{O}_{\alpha\beta}, 0)=0$. On the other hand, defining
\[
\mathcal{O}_1 = \left\{u\in\co : \ou_{\overline\lambda, \gamma} (x)<u(x)<\beta \text{ in } \overline\Omega\right\},
\]
and applying Lemma~\ref{lem:LS_2}, we obtain that $\operatorname{deg}(I-K, \mathcal{O}_1, 0)=1$. As $\mathcal{O}_1\subset \mathcal{O}_{\alpha\beta}$, the additivity property of the Leray-Schauder degree implies that
\begin{equation}
\label{eq:Pf_Th_exist}
\operatorname{deg}\left(I-K, \mathcal{O}_{\alpha\beta} \setminus \overline{\mathcal{O}_1}, 0\right) = -1.
\end{equation}

Since $\ou_{\lambda, \gamma}$ always belongs to $\mathcal{O}_1$ because of its maximality, thanks to~\eqref{eq:Pf_Th_exist} we can conclude that a second solution $\tilde u_{\lambda, \gamma}$ to~\eqref{eq:PbLambda} in $\mathcal{O}_{\alpha\beta}$ exists whenever $\lambda > \overline\lambda$.

It remains for us to show that $\overline\lambda(\gamma)$ (and then $\lambda_\mathrm{mult}(\gamma)$) converges to 0 as $\gamma\to 0$. The proof is complete if we prove that, for any $\lambda>0$, there exists some $\tilde\gamma>0$ small such that $\overline\lambda(\gamma) \leq \lambda$ for any $\gamma<\tilde\gamma$.

With this aim, we fix $\lambda>0$. Now, we use Proposition~\ref{prop:exist} to ensure the existence of some $\tilde \gamma>0$ depending on $\lambda$ such that, for any $\gamma<\tilde\gamma$, a maximal solution $\overline v_{\lambda,\gamma}$ to~\eqref{eq:PbAux2} (with $\tilde f$ replaced by $\lambda \tilde f$) exists verifying $\overline v_{\lambda,\gamma}(x) \in [\alpha,\beta)$ for all $x\in\overline\Omega$. Due to the definition of $\overline\lambda(\gamma)$, we conclude that $\overline\lambda(\gamma) \leq \lambda$ for all $\gamma<\tilde \gamma$.
\end{proof}

\begin{remark}
    If $f(0)\geq 0$, then any nonnegative (and nontrivial) solution to the Robin problem~\eqref{eq:PbLambda} is actually positive in $\overline\Omega$. This follows from Proposition~\ref{prop:comparison}, since in this case the constant zero function is a subsolution.
\end{remark}

Now, we can easily prove Theorem~\ref{th:lambda_bh} taking into account the well-known results of existence and nonexistence of solution for the Dirichlet problem~\eqref{eq:PbDirich}.

\begin{proof}[Proof of Theorem~\ref{th:lambda_bh}]
Let $\gamma>0$ be fixed. Consider the set
\[
A_\gamma:= \left\{\lambda\geq 0: \textrm{\eqref{eq:PbLambda} admits solution $u\geq 0$ with } \alpha < \|u\|_{C(\overline\Omega)}<\beta \right\}.
\]
Thanks to Theorem~\ref{th:exist_gam_fix}, we have that $A_\gamma \neq \emptyset$. By Lemma~\ref{lem:stability} and Corollary~\ref{cor:comparison}, we can also ensure that this set is closed. Then, we can define
\[
\lambda_\mathrm{min}(\gamma) = \min A_\gamma.
\]
Since $0\notin A_\gamma$, then $\lambda_\mathrm{min}(\gamma)>0$. By definition, we have that $\lambda_\mathrm{min}(\gamma) \leq \lambda_\mathrm{mult}(\gamma)$, where $\lambda_\mathrm{mult}(\gamma)>0$ is the one given in Theorem~\ref{th:exist_gam_fix}. Since $\lambda_\mathrm{mult}(\gamma)\to 0$ as $\gamma\to 0$, we immediately obtain that $\lambda_\mathrm{min}(\gamma)\to 0$ when $\gamma\to 0$.

Finally, we study the behaviour of $\lambda_\mathrm{min}(\gamma)$ as $\gamma$ goes to $\infty$.

On the one hand, if~\eqref{eq:hyp_hess} does not hold, then $\lambda_\mathrm{min}(\gamma)$ cannot have any accumulation point at infinity. The reason is that, if some accumulation point $\lambda$ exists, one could apply Lemma~\ref{lem:stability} to find a solution to the Dirichlet problem~\eqref{eq:PbDirich} belonging to $\mathcal{O}_{\alpha\beta}$ for this $\lambda$, but this is a contradiction with~\cite[Theorem~1]{Cl-Sw}. Therefore, we deduce that $\lambda_\mathrm{min}(\gamma)\to \infty$ as $\gamma\to\infty$.

On the other hand, if~\eqref{eq:hyp_hess} holds, then
\[
\lambda_\infty := \min \left\{\lambda\geq 0: \textrm{\eqref{eq:PbDirich} admits solution $u\geq 0$ with } \alpha < \|u\|_{C(\overline\Omega)}<\beta \right\}
\]
is finite and positive by~\cite[Theorem~1]{Cl-Sw}. Let $w_{\lambda_\infty}$ be a nonnegative solution of~\eqref{eq:PbDirich} for $\lambda=\lambda_\infty$ with maximum between $\alpha$ and $\beta$. Thanks to the Hopf's Lemma, $w_{\lambda_\infty}$ is a subsolution of $(P_{\lambda_\infty,\gamma})$ for any $\gamma>0$ and, as $\beta$ is always a supersolution, we deduce that $\lambda_\mathrm{min}(\gamma) \leq \lambda_\infty$ for all $\gamma>0$. Due to the definition of $\lambda_\infty$ (and thanks to Lemma~\ref{lem:stability}), the only possible accumulation point of $\lambda_\mathrm{min}(\gamma)$ at infinity is $\lambda_\infty$. Then, we can conclude that $\lambda_\mathrm{min}(\gamma)\to \lambda_\infty$ as $\gamma\to\infty$.
\end{proof}

\begin{remark}
In general, we are not able to show that problem~\eqref{eq:PbLambda} has two solutions in $\mathcal{O}_{\alpha\beta}$ for any $\lambda>\lambda_\mathrm{min}$. However, when $f\geq 0$, this can be easily proved following the same reasoning as in Theorem~\ref{th:exist_gam_fix}. Specifically, one must take into account that the maximal solution to $(P_{\lambda_\mathrm{min}, \gamma})$ is a strict subsolution of ~\eqref{eq:PbLambda} for any $\lambda>\lambda_\mathrm{min}$.

Even more, when $f\geq 0$ one can follow~\cite[Theorem~1.1]{C-MA-MA} to show that, for any fixed $\gamma>0$, there is an unbounded continuum of solutions
\[\Sigma\subset \{(\lambda,u)\in \R_0^+\times \mathcal{O}_{\alpha\beta}: u \text{ solves } \eqref{eq:PbLambda}\}\]
with $\subset$-shape such that $\lambda_\mathrm{min} = \min(\operatorname{Proj_\lambda} \Sigma)$ and that, for every $\lambda>\lambda_\mathrm{min}$, the set $(\{\lambda\}\times \mathcal{O}_{\alpha\beta})\cap \Sigma$ has at least two elements.
\end{remark}

To conclude this section, we prove Theorem~\ref{th:limit_gamma}, which provides some information about the behaviour of the two solutions found in Theorem~\ref{th:exist_gam_fix} when $\gamma$ approaches zero and $\lambda$ remains fixed.

\begin{proof}[Proof of Theorem~\ref{th:limit_gamma}]

    By Theorem~\ref{th:exist_gam_fix}, we know that $\lambda_{\mathrm{mult}}(\gamma)\to 0$ when $\gamma\to 0$. Consequently, we can find $\tilde \gamma>0$ such that $\lambda_\mathrm{mult}(\gamma)<1$ for all $\gamma<\tilde\gamma$. Then, for $\gamma< \tilde \gamma$, problem~\eqref{eq:PbGamma} has two solutions in $\mathcal{O}_{\alpha\beta}$, which we denote by $\ou_\gamma$ and $\tilde u_\gamma$, where $\ou_\gamma$ is the maximal solution in $[0,\beta]$.

    Arguing as in Proposition~\ref{prop:exist}, we can prove that $\ou_\gamma(x)$ is a nonincreasing function in $\gamma$ for any $x\in \overline\Omega$ and that $\ou_\gamma \to \beta$ in $\co$ when $\gamma\to 0$.

    On the other hand, Lemma~\ref{lem:stability} implies that $\tilde u_\gamma\to \tilde u$ in $\co$, where $\tilde u$ denotes a solution for the Neumann problem~\eqref{eq:PbNeumann}. Notice that $\|\tilde u\|_\co\in [\alpha,\beta]$. Since we are assuming that~\eqref{eq:PbNeumann} has no non-constant solutions with maximum in $[\alpha,\beta]$, it follows that either $\tu\equiv \alpha$ or $\tu\equiv \beta$. Our aim is to prove that only the first case can happen. To do that, we follow the ideas of~\cite{Cl-Sw}. The strategy is to show that~\eqref{eq:PbGamma} admits at most one solution in a left neighbourhood of $\beta$ when $\gamma$ is small enough.

    First, we prove that any solution $u$ to~\eqref{eq:PbGamma} contained in $(\beta-\delta,\beta)$ is stable, where $\delta$ is given by~\eqref{eq:hyp_mon}. Recall that a solution $u$ to~\eqref{eq:PbGamma} is stable if the first eigenvalue $\mu_1(\gamma, f'(u))$ of the problem
    \begin{equation*}
    \begin{cases}
    \displaystyle -\Delta v - f'(u)v = \mu v & \mbox{in} \; \Omega,\\
    \frac{\partial v}{\partial \nu} + \gamma v = 0 & \mbox{on} \; \partial \Omega,
    \end{cases} 
    \end{equation*}
    is positive, i.e., if $\mu_1(\gamma, f'(u))>0$. This eigenvalue is given by
    \[
    \mu_1(\gamma, f'(u)) = \inf_{\varphi\in\hu, \|\varphi\|_{L^2(\Omega)=1}} \left( \into \nabla \varphi^2 + \gamma \intdo |\varphi|^2  - \into f'(u)\varphi^2 \right).
    \]
    If $u$ is such that $u(x)\in(\beta-\delta,\beta)$ for every $x\in\overline\Omega$, then hypothesis~\eqref{eq:hyp_mon} implies that $f'(u)\leq 0$ in $\Omega$. In this way, we deduce that $\mu_1(\gamma,f'(u)) \geq \mu_1(\gamma,0)>0$, where we have taken into account that $\mu_1(\gamma,0)$ is just the first eigenvalue of the Robin Laplacian, which is positive. Therefore, $u$ is stable. 

    Let $\gamma_0>0$ be such that $\ou_{\gamma_0}(x)\in (\beta-\delta,\beta)$ for all $x\in \overline\Omega$. We claim that~\eqref{eq:PbGamma} has at most one solution contained in the set
    \[
    \mathcal{O}_1 = \{u\in\co : \ou_{\gamma_0}(x)<u(x)<\beta \text{ in } \overline\Omega\}
    \]
    when $\gamma<\gamma_0$. In fact, for any $\gamma<\gamma_0$ function $\ou_{\gamma_0}$ is a strict subsolution of~\eqref{eq:PbGamma} and $\beta$ is a strict supersolution, so Lemma~\ref{lem:LS_2} implies that
    \[
    \operatorname{deg} (I-K, \mathcal{O}_1,0) = 1,
    \]
    where $K$ is defined in~\eqref{eq:def_K} (with $\lambda = 1$). 
    
    Since any solution to~\eqref{eq:PbGamma} contained in $(\beta-\delta,\beta)$ is stable, then any such solution has index 1. This is because any stable solution is an isolated fixed point of $K$, which implies that its index is 1 (see~\cite{Rab2}). Together with the the additivity of the topological degree, this allows us to deduce that there is only one solution to~\eqref{eq:PbGamma} contained in $\mathcal{O}_1$.

    As $\ou_\gamma \in \mathcal{O}_1$ for every $\gamma<\gamma_0$, the sequence $\tu_\gamma$ cannot converge to $\beta$ in $\co$. Therefore, we conclude that $\tu\to \alpha$ in $\co$.
\end{proof}

\section*{Acknowledgements}
This work has been funded by the Spanish Ministry of Science and Innovation, the Agencia Estatal de Investigación (AEI), and the European Regional Development Fund (ERDF), under grant PID2021-122122NB-I00. The first and second authors have also been supported by the Junta de Andalucía under grant FQM-194, and the third author under grant FQM-116. Additionally, the second author has been supported by the FPU predoctoral fellowship of the Spanish Ministry of Universities (FPU21/04849). The first and third authors have also been funded by CDTIME.
	


\end{document}